\documentclass[10pt, letterpaper, conference]{ieeeconf}  
%
%

\IEEEoverridecommandlockouts                              

\overrideIEEEmargins                                     
\usepackage{graphicx, footnote, amssymb, mathtools}
 \usepackage{graphicx}
\usepackage{amsmath,epsfig,amssymb}
\usepackage{times,amsmath,epsfig,amssymb,graphicx,amsfonts}
\usepackage{color}
\usepackage{amsmath}
\usepackage{graphicx, footnote, amssymb, mathtools}
 \usepackage{graphicx}
\usepackage{amsmath,epsfig,amssymb}
\usepackage{times,amsmath,epsfig,amssymb,graphicx,amsfonts}
\usepackage{color}
\pagestyle{plain}
\usepackage{graphicx, footnote, amssymb, mathtools}
 \usepackage{graphicx}
\usepackage{amsmath,epsfig,amssymb}
\usepackage{times,amsmath,epsfig,amssymb,graphicx,amsfonts}

\usepackage{color}
\usepackage{multirow}
\usepackage{psfrag}
\usepackage{epsfig}
\usepackage{subfig}
\usepackage{psfrag}
\usepackage{algorithm}
\usepackage{algorithmic}
 \newcommand{\bi}{\begin{itemize}}
\newcommand{\ei}{\end{itemize}}
\newcommand{\bal}{\begin{align}}
\newcommand{\eal}{\end{align}}

\newtheorem{theorem}{\textbf{Theorem}}
\newtheorem{lemma}{\textbf{Lemma}}

\newtheorem{definition}{\textbf{Definition}}

\newtheorem{remark}{\textbf{Remark}}
 
\newtheorem{proposition}{\textbf{Proposition}}
\newtheorem{assumption}{\textbf{Assumption}}


\title{\LARGE \bf Graph Balancing for Distributed Subgradient Methods over Directed Graphs}
\author{Ali Makhdoumi$^{*}$ and Asuman Ozdaglar$^{*}$
\thanks{$^{*}$
MIT,
Cambridge, MA 02139, 
Emails: makhdoum@mit.edu, asuman@mit.edu}
}

\begin{document}

\maketitle
\thispagestyle{empty}
\pagestyle{empty}
\begin{abstract}
We consider a multi agent optimization problem where a set of agents collectively solves a global 
optimization problem with the objective function given by the sum of locally known convex 
functions. We focus on the case when information exchange among agents takes place over a directed network and propose a distributed subgradient algorithm in which each agent performs local processing based on information obtained from his incoming neighbors. Our algorithm uses weight balancing to overcome the asymmetries caused by the directed communication network, i.e., agents scale their outgoing information with dynamically updated weights that converge to balancing weights of the graph. We show that both the objective function values and the consensus  violation, at the ergodic average of the estimates generated by the algorithm, converge with rate $O(\frac{\log T}{\sqrt{T}})$, where $T$ is the number of iterations. A special case of our algorithm provides a new distributed method to compute average consensus over directed graphs. 
\end{abstract}
\vspace{-.05 in}
\section{Introduction}
\vspace{- .05 in}
\subsection{Motivation}
Many of today's optimization problems in data science (including statistics, machine learning, and data mining) use distributed computation. Modern processors have access to thousands of data points much more than they can process. Therefore, there is a need to distribute data among different data centers and then process it in a decentralized way based on the information that is provided to them. Distributed computation can lead to gains in computational efficiency in statistical learning, as shown by a number of authors (\cite{dekel2012optimal, recht2011hogwild, agarwal2011distributed, duchi2014optimality}). The applications in statistical learning along with other applications in distributed control (see e.g. distributed sensor networks \cite{duarte2004vehicle}, coordination \cite{jadbabaie2003coordination}, and flow control  \cite{low1999optimization}) have motivated a flurry of research on distributed approaches for solving optimization problems where the objective function is the sum of local objective functions of agents (nodes) that are connected through a network (see  \cite{nedic2009distributed, wei2012distributed, wei20131, nedic2010constrained, nedic2007rate, lobel2011distributed, lobel2011distributed1, ram2009distributed, li2010competitive, rabbat2004distributed}).

Most of the existing algorithms 
assume information exchange over undirected networks where the connection between nodes are bidirectional, meaning that if node $i$ can send information to node $j$, then node $j$ can also send information to node $i$.\footnote{We use the terms graph and network interchangeably.} However, in many applications, the underlying graph is directed because nodes typically broadcast at different power levels and have varying interference and noise patterns, implying communication capability in one direction, but not the other. In this paper, we consider a multi-agent optimization problem where a set of agents collectively minimize the sum of locally known convex functions $f_i(x)$, using information exchange over a directed network.
\vspace{-.05 in}
 \subsection{Related Works and Contribution}
 Our paper is related to a large recent literature on distributed methods for solving multi agent optimization problems over networks. Much of this literature builds on the seminal works \cite{tsitsiklis1984problems, tsitsiklis1986distributed}, which proposed gradient methods that can parallelize computations across multiple processors. We summarize subgradient based distributed methods for solving multiagent optimization problems over undirected and directed graphs.  
\\ \textbf{Undirected Graphs: }A number of recent papers  proposed subgradient type distributed methods that use consensus or averaging mechanisms for aggregating information among the agents over an undirected network (see \cite{nedic2007rate, nedic2010constrained, lobel2011distributed, ram2010distributed, johansson2008subgradient, tsianos2012push, matei2011performance, nedic2011asynchronous,  zhu2012distributed,chen2012diffusion, duchi2012dual}). For convex local objective functions, these methods achieve $O(\frac{\log T}{\sqrt{T}})$ convergence rate, where $T$ is the number of iterations. The recent paper \cite{yuan2013convergence} adopts stronger assumptions on the local objective functions, i.e., they are convex with Lipschitz continuous gradients, and shows that  a distributed gradient method with averaging converges at rate $O(\frac{1}{T})$ to an error neighborhood (with the further assumption of strongly convex local objective functions, linear rate is achieved to an error neighborhood). Another contribution \cite{jakovetic2014fast} assumes local objective functions have continuous and bounded gradients and uses Nesterov's acceleration to design a distributed algorithm with rate $O(\frac{\log T}{T})$. The recent paper \cite{shi2014extra}, proposes a gradient-based distributed algorithm for convex objective function with Lipschitz continuous gradients and shows its convergence with rate $O(\frac{1}{T})$. 
\\ \textbf{Directed Graphs: }A few recent papers proposed and studied distributed subgradient methods over directed graphs \cite{tsianos2012push, nedic2013distributed, nedic2013stronglydistributed}. The key idea used in these papers is the incorporation of the {\it push-sum algorithm}, which is a distributed algorithm presented in \cite{kempe2003gossip} to obtain the average of initial values of nodes over a directed graph. Push-sum, while being an effective approach in obtaining averages over a directed graph, involves updates that include a nonlinear operation (division by weight estimates), which makes the analysis within a subgradient optimization method involved (see e.g. \cite{nedic2013distributed}). In this paper, we use an alternative approach based on weight-balancing to design a distributed subgradient algorithm over directed networks. The notion of weights that balance a directed graph was introduced in \cite{hooi1970class} and studied recently in a  number of papers with the goal of designing algorithms that enable their computation in a distributed manner (see \cite{gharesifard2014distributed,hadjicostis2012distributed, rikos2014distributed, priolo2013decentralized}). We combine such an algorithm for updating weights together with a distributed subgradient algorithm and show that these updates implemented simultaneously in the same time scale solves multiagent optimization problem over directed graphs. The update step for agent's estimates involves operators linear in each estimate, allowing the analysis to use techniques from time-varying non-homogeneous non-negative matrix theory \cite{seneta2006non}. Although we do not pursue this here, our algorithm can be generalized to work over time-varying directed graphs (see e.g. \cite{nedic2007rate, nedic2013distributed} for the analysis of distributed subgradient methods over time-varying graphs). Note that our algorithm does not require the local functions to have Lipschitz continuous gradient nor being strongly convex (the only requirement of our algorithm is that the functions are convex and not necessarily smooth). Indeed with these strong assumptions the rate of convergence can be improved as shown in  \cite{yuan2013convergence} and  \cite{nedic2013stronglydistributed} for algorithms over undirected graphs.

Our work also contributes to the vast literature on the consensus problem, where agents have a more specific goal of aligning their estimates (see \cite{jadbabaie2003coordination, olfati2004consensus, xiao2007distributed, olshevsky2009convergence} for consensus over undirected graphs and \cite{kempe2003gossip, dominguez2013distributed, priolo2014distributed} for consensus over directed graphs). In particular, a special case of our algorithm provides a new distributed method for computing average of initial values over a directed graph. Out contribution here is most closely related to \cite{priolo2014distributed} and \cite{dominguez2013distributed}, which proposed distributed algorithms for average consensus over  directed graphs using balancing weights. Reference \cite{priolo2014distributed} builds on earlier work \cite{priolo2013decentralized}, which presented a distributed algorithm for computing balancing node weights based on approximating the left eigenvector associated with the zero eigenvalue of the Laplacian matrix of the underlying directed network. In \cite{priolo2014distributed}, the authors used this algorithm for updating weights in the same time scale as the update for estimates and showed convergence to average of initial values. Reference \cite{dominguez2013distributed} provides a similar algorithm for average consensus based on an earlier work \cite{hadjicostis2012distributed}, which proposes an update rule for computing balancing edge weights. 

\vspace{-.05 in}
\subsection{Outline}
The organization of paper is as follows. In Section \ref{sec:formulation}, we give the problem formulation and present the distributed algorithm. In Section \ref{sec:convergenceweights}, we show the convergence of weights to balancing weights of the graph. In Section \ref{sec:convergencealgorithm}, we consider the sequence of ergodic (time) averages of the estimates generated by our algorithm. We then show optimality convergence of our algorithm and establish $O(\frac{\log T}{ \sqrt{T}})$ rate of convergence. Finally, in Section \ref{sec:numerical}, we provide numerical results that illustrate the performance of our algorithm, which leads to concluding remarks in Section \ref{sec:conclusion}. 
\vspace{-.05 in}
\subsection{Basic Notations }
A vector $\mathbf{x}$ is viewed as a column vector. For a matrix $A$, we write 
$[A]_i$ to denote the $i$th column of
matrix $A$, $[A]^i$ to denote the $i$th row of matrix $A$, and $A_{ij}$ to denote the entry at $i$th row and $j$th column. For a vector $\mathbf{x}$, $x_i$  denotes the $i$th component 
of the vector. We show the $L_1$ norm of a vector $\mathbf{x}\in \mathbb{R}^n$ with $||\mathbf{x}||_1 \triangleq {\sum_{i=1}^n |x_i|}$. We also let $||\mathbf{x}||_{\infty}=\max_{i} |x_i|$. For a set $S$, $|S|$ denotes the number of elements of $S$. 
\vspace{- .05 in}
\section{Problem Setup and Algorithm}\label{sec:formulation}
\subsection{Formulation}
We consider a set of agents (nodes) $V=\{1, \dots, n\}$ connected through a directed graph $G=(V, E)$ where $E \subset V \times V$ is the set of directed edges, i.e., $(i, j) \in E$ represents a directed edge from agent $i$ to agent $j$. We denote the in-neighbors and out-neighbors of an agent $i$ by $N^{\text{in}}(i)=\{j \in V ~ |~ (j , i) \in E\}$ and $N^{\text{out}}(i)=\{j \in V ~ |~ (i , j) \in E\}$, respectively. We also use  $d_i^{\text{in}}=|N^{\text{in}}(i)|$ and $d_i^{\text{out}}=|N^{\text{out}}(i)|$ to denote the number of in-neighbors and out-neighbors. 

We consider the following optimization problem 
\begin{align}\label{eq:optfomulation}
\min_{x \in \mathbb{R}} \sum_{i=1}^n f_i(x),
\end{align}
where $f_i: \mathbb{R} \to \mathbb{R}$ is a convex function (possibly non-smooth), known to agent $i$ only. The goal is to solve \eqref{eq:optfomulation} using an algorithm that involves each agent performing computations based on his {\it local objective function}  $f_i$ and exchanging information over the directed graph (i.e., receiving information from his in-neighbors and sending the outcome of his computation to his out-neighbors).

We adopt the following standard assumption on the underlying graph $G$. 
\begin{assumption}[Strongly connected graph]\label{assump:strongconnectivity}
\textup{
The graph $G=(V, E)$ is strongly connected, i.e., for all nodes $i, j \in V$, there exists a directed path from $i$ to $j$. 
}
\end{assumption}
This assumption ensures that every node $i$ receives information from every other node $j$ in the graph. 
\vspace{-.05 in}
\subsection{Algorithm}
Our algorithm generalizes the distributed subgradient algorithm presented in \cite{nedic2009distributed} to allow its implementation over directed graphs. The algorithm in \cite{nedic2009distributed} requires some form of symmetry in information exchange between pairs of nodes (essentially that node $j$'s information in node $i$'s update is scaled with the same weight used in scaling node $i$'s information in node $j$'s update). This kind of symmetry does not exist in directed graphs with directed communication over edges. Hence, a direct application of the algorithm given in \cite{nedic2009distributed} will lead to information from high out-degree nodes to be disproportionately represented in the estimate formed by agents. To alleviate this, our proposed algorithm scales outgoing information from each node with time-varying weights which in the limit ensures the incoming and the outgoing information of a node to be balanced. The following definition introduces node weights that satisfy this balancing requirement. The notion of node weights that balance a directed graph was proposed in  \cite{hooi1970class} and used recently for deriving a Lyapunov function for convergence analysis
of average-consensus \cite{olfati2007consensus}, \cite{olfati2004consensus}, consensus on general
functions \cite{cortes2008distributed}, design of stable flocking algorithms \cite{lee2007stable}, and traffic-flow problems \cite{hooi1970class}. 

\begin{definition}[Balancing weights]
\textup{
The node weights $w_i$ for $i \in V$ balance a directed graph $G$ if for any $i$, we have 
\begin{align*}
w_i d_i^{\text{out}} =\sum_{j \in N^{\text{in}}(i)} w_j.
\end{align*}
This definition ensures that the total weight outgoing from node $i$ (measured by $w_i d_i^{\text{out}}$) is equal to the total weight incoming to node $i$ (measured by $\sum_{j \in N^{\text{in}}(i)} w_j$), hence the term  balancing weights. 
}
\end{definition}
We next describe our algorithm. Let $x_i(t) \in \mathbb{R}$ denote the estimate of agent $i$ at time $t$ for the optimal solution of \eqref{eq:optfomulation}. Each agent $i$ starts from arbitrary value $x_i(0) \in \mathbb{R}$ and weight $w_i(0) \in \mathbb{R}$. At time $t$, agent $i$ updates its estimate $x_i(t)$ as 
\begin{align}\label{eq:estimateupdate}
 x_i(t+1) = & x_i(t) \left( 1- w_i(t) d_i^{\text{out}} \right) \nonumber \\& +  \left( \sum_{j \in N^{\text{in}}(i)} w_j(t)  x_j(t) \right)   - \alpha(t) g_i(t),
\end{align}
where $g_i(t)$ is a subgradient of $f_i$ at $x_i(t)$, i.e., $g_i(t) \in \partial f_i(x_i(t))$, $\alpha(t)$ is a step size sequence and $w_i(t)$ is a scalar weight. Each agent $i$ linearly combines the estimates of his incoming neighbors and his own estimate and takes a step along the  negative subgradient of his local objective function. This is followed by the following weight update at node $i$
\begin{align}\label{eq:weightupdate}
w_{i}(t+1) = \frac{1}{2} w_{i}(t) + \frac{1}{d_i^{\text{out}}} \sum_{j \in N^{\text{in}}(i)} \frac{1}{2} w_{j}(t).
\end{align}
In order to understand this update rule, note that if the sequence of weights $\{w_i(t)\}_t$, $i=1, \dots, n$ converges, it follows from \eqref{eq:weightupdate} that the limiting weights balance the graph. With these balancing weights, update \eqref{eq:estimateupdate} ensures that in the limit the incoming information and the outgoing information of node $i$ is properly scaled.

In principle, update \eqref{eq:estimateupdate} is similar to the algorithm considered in \cite{nedic2009distributed} for updating estimates, i.e., each agent updates his estimate by linearly combining estimates of neighbors together with a local optimal step. However, note that the weight matrix, i.e., the matrix that contains the scalars that multiply the estimates of the agents, is not doubly stochastic. We will show in Sections \ref{sec:convergenceweights}-\ref{sec:convergencealgorithm} that it is column stochastic and becomes doubly stochastic only in the limit and our analysis shows this property suffices to guarantee that the estimates (obtained by \eqref{eq:estimateupdate}) converge to the optimal solution of problem \eqref{eq:optfomulation}.


\vspace{-.05 in}
\section{Convergence of Weights} \label{sec:convergenceweights}
In this section, we show that the sequence of weights generated by the update \eqref{eq:weightupdate} converges to balancing weights for $G$. 
Using the notation $\mathbf{w}(t)=[w_1(t), \dots, w_n(t)]'$,  we can write the weight updates of our algorithm more compactly as 

\begin{align}
\mathbf{w}(t+1) = \frac{1}{2} (I+ D^{-1}  A ) \mathbf{w}(t),
\end{align}
where $D=\text{diag}(d_1^{\text{out}}, \dots, d_n^{\text{out}})$ and $A$ is the adjacency matrix of the directed graph defined as $A_{ij}=1$ for all $i, j \in N^{\text{in}}(i)$. We let $P= \frac{1}{2} (I+ D^{-1}  A )$ and rewrite the weight updates as  
\begin{align*}
\mathbf{w}(t+1) = P \mathbf{w}(t).
\end{align*}
Next, we show the weight sequence generated by the update \eqref{eq:weightupdate} converges to balancing weights. 
\begin{lemma} \label{lem:convergenceofweights}
Under Assumption \ref{assump:strongconnectivity}, the sequence $\mathbf{w}(t)$ converges to $\mathbf{w}$ and $\mathbf{w}=[ w_1, \dots, w_n ]'$ balances the graph. 
\end{lemma}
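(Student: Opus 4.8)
The plan is to study the linear recursion $\mathbf{w}(t+1)=P\mathbf{w}(t)$ through the spectral structure of $P=\frac12(I+Q)$, where $Q\triangleq D^{-1}A$, and to show that $\lim_{t\to\infty}P^t$ exists and has range equal to the line spanned by a balancing weight vector. The first step is to identify the spectral radius of $Q$. Since $A_{ij}=1$ exactly when $(j,i)\in E$, each column $j$ of $A$ contains $|N^{\text{out}}(j)|=d_j^{\text{out}}$ ones, so $AD^{-1}$ is column stochastic and hence $\rho(AD^{-1})=1$; because $D^{-1}A$ and $AD^{-1}$ are similar ($D^{-1}(AD^{-1})D=D^{-1}A$), we get $\rho(Q)=1$. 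Next, the zero/nonzero pattern of $Q$ is that of the reverse graph of $G$ (indeed $Q_{ij}>0\iff(j,i)\in E$), which is strongly connected by Assumption \ref{assump:strongconnectivity}; hence $Q$ is irreducible and Perron--Frobenius applies: $1=\rho(Q)$ is an algebraically simple eigenvalue of $Q$ with a strictly positive right eigenvector $\mathbf{w}>0$, and every other eigenvalue of $Q$ has modulus at most $1$. Unwinding $Q\mathbf{w}=\mathbf{w}$ gives $A\mathbf{w}=D\mathbf{w}$, i.e. $\sum_{j\in N^{\text{in}}(i)}w_j=d_i^{\text{out}}w_i$ for every $i$, which is precisely the balancing condition; so balancing weights exist, and the eigenspace of $P$ for eigenvalue $1$ is $\operatorname{span}(\mathbf{w})$.

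I would then transfer this to $P$. The eigenvalues of $P$ are $\frac{1+\lambda}{2}$ as $\lambda$ ranges over $\operatorname{spec}(Q)$; the eigenvalue $\lambda=1$ yields the simple eigenvalue $1$ of $P$, while any $\lambda\neq1$ with $|\lambda|\le1$ yields $\big|\frac{1+\lambda}{2}\big|\le\frac{1+|\lambda|}{2}\le1$ with equality only if $\lambda=1$, hence $\big|\frac{1+\lambda}{2}\big|<1$. (This averaging against the identity --- the ``lazy'' form of the weight update --- is exactly what removes any periodicity of $Q$, e.g. when $G$ is a directed cycle and $Q$ is a cyclic permutation.) Thus $1$ is a simple eigenvalue of $P$ and all other eigenvalues lie strictly inside the open unit disk, so from the Jordan form $P^t$ converges to the rank-one spectral projector onto $\operatorname{span}(\mathbf{w})$ along the complementary $P$-invariant subspace. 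Consequently $\mathbf{w}(t)=P^t\mathbf{w}(0)$ converges to a vector in $\operatorname{span}(\mathbf{w})$, which again balances $G$ since the balancing condition is homogeneous; moreover, writing $\mathbf{v}>0$ for the left Perron eigenvector of $P$, the limit equals $\frac{\mathbf{v}'\mathbf{w}(0)}{\mathbf{v}'\mathbf{w}}\,\mathbf{w}$, so it is a strictly positive multiple of $\mathbf{w}$ whenever $\mathbf{w}(0)>0$.

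The only genuine obstacle is establishing that $1=\rho(P)$ is the \emph{unique} eigenvalue of $P$ on the unit circle, so that $P^t$ actually converges rather than merely staying bounded. The column-stochasticity of $AD^{-1}$ delivers $\rho(Q)=1$ cleanly, the $\frac12(I+Q)$ structure handles aperiodicity, and the remainder is routine Perron--Frobenius and Jordan-form bookkeeping. One should keep in mind that $Q$ itself need not be primitive, so the argument must be carried out on $P$ rather than directly on $Q$.
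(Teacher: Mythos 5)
Your proof is correct, and it shares the paper's central trick while differing in how convergence of $P^t$ is established. Both arguments pin the spectral radius at $1$ by observing that $AD^{-1}$ (the paper works with $\bar P=\frac{1}{2}(I+AD^{-1})$) is column stochastic and then transferring this to $D^{-1}A$ by similarity, and both rest on strong connectivity plus the lazy $\frac{1}{2}(I+\cdot)$ structure. The paper then argues that $P$ itself is primitive (nonnegative, positive diagonal, strongly connected pattern) and cites the Perron--Frobenius limit theorem for primitive matrices to get existence of $\lim_t P^t$, extracting the balancing property afterwards from the fixed-point equation $\mathbf{w}=P\mathbf{w}$. You instead apply Perron--Frobenius only to the irreducible matrix $Q=D^{-1}A$ to get a simple eigenvalue $1$ with a strictly positive eigenvector, read off the balancing condition $A\mathbf{w}=D\mathbf{w}$ up front, and then obtain convergence of $P^t$ by hand: the affine spectral map $\lambda\mapsto\frac{1+\lambda}{2}$ sends every eigenvalue $\lambda\neq 1$ of $Q$ strictly inside the unit disk, so Jordan-form bookkeeping yields convergence to the rank-one projector onto $\operatorname{span}(\mathbf{w})$. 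Your route is slightly more self-contained (it does not need the primitive-matrix limit theorem, only the irreducible Perron--Frobenius statement) and gives a bit more information, namely positivity of the balancing vector and the explicit limit $\frac{\mathbf{v}'\mathbf{w}(0)}{\mathbf{v}'\mathbf{w}}\,\mathbf{w}$; the paper's version is shorter because it delegates the convergence step entirely to the cited theorem. Your closing caveat about not arguing primitivity of $Q$ directly is well taken and is exactly why both proofs work with the lazy matrix $P$ for the convergence step.
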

\begin{proof}
We first show that $P$ is primitive.\footnote{A matrix $P$ is primitive if there exists $n \in \mathbb{N}$ such that all entries of $P^n$ are positive.} Note that all entries of $P$ are non-negative, the diagonal entries of $P$ are positive, and the underlying graph is strongly connected. The result follows from these facts. We next show the spectral radius of $P$, $\rho(P)$, is one. We define an auxiliary matrix $\bar{P}=  \frac{1}{2} (I+ A D^{-1} )$ and show it is column stochastic. This holds because the summation of the entries on the $j$th column of $\bar{P}$ is $\frac{1}{2} + \sum_{i~:~ j \in N^{\text{in}}(i)} \frac{1}{2} \frac{1}{d_j^{\text{out}}}=1$. Since $\bar{P}$ is column stochastic, we have $\rho(\bar{P})=1$. Therefore, we have $ 1=\rho(\bar{P})= \rho(\bar{P} D D^{-1} ) = \rho(D^{-1} \bar{P} D )
 = \rho(\frac{1}{2}( I + D^{-1} A D^{-1} D))= \rho(P)$,  
where the equality $ \rho(\bar{P} D D^{-1} ) = \rho(D^{-1} \bar{P} D )$ holds because the set of eigenvalues of $AB$ is the the same as the set of eigenvalues of $BA$ for two arbitrary matrices $A$ and $B$. 
Finally, since $P$ is a primitive matrix with $\rho(P)=1$, using Perron-Frobenius Theorem, the limit $\lim_{t \to \infty} P^t$ exists and as a result $\mathbf{w}(t)$ converges to some $\mathbf{w}$ that satisfies $\mathbf{w}= P \mathbf{w}$. Using the definition of $P$, this yields $w_i= \frac{1}{2} w_i + \sum_{j: ~ j \in N^{\text{in}}(i)} \frac{1}{2} \frac{1}{d_i^{\text{out}}} w_j$, 
which results in $d_i^{\text{out}} \frac{1}{2} w_i = \sum_{j \in N^{\text{in}}(i)} \frac{1}{2} w_j$. 
Therefore, $\{\mathbf{w}(t)\}_{t=0}^{\infty}$ converges to weights that balance the graph.

\end{proof}

The generated  weight sequence $\mathbf{w}(t)$ depends on the initial value of $\mathbf{w}(0)$. We consider the sequence update \eqref{eq:estimateupdate} and note that in order to guarantee the contribution of agent $i$'s estimate at time $t$ in his estimate at time $t+1$ is positive, we need  the coefficient of $x_i(t)$ which is $1- w_i(t) d_i^{\text{out}}$ to be positive at each iteration. In the next lemma we show that by choosing a small $\mathbf{w}(0)$, we can guarantee this. Let $D$ denote the diameter and $d^*=\max_i d_i^{\text{out}}$ denote the maximum out-degree of the graph $G$. 
\begin{lemma}\label{lem:smallwbeta}
If for all $i=1, \dots, n$, $w_i(0) \le ({1}/{{d^*})^{2D+1}}$, then $w_i(t) d_i^{\text{out}}<1$ for all $i$ and $t \ge 0$. 
\end{lemma}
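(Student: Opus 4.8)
The plan is to isolate a quantity that is preserved by the weight update \eqref{eq:weightupdate} and then bound each of the numbers $w_i(t)\,d_i^{\text{out}}$ by it. Write $\Delta=\mathrm{diag}(d_1^{\text{out}},\dots,d_n^{\text{out}})$ for the diagonal matrix of out-degrees ($D$ denotes the diameter throughout this proof), and recall from the proof of Lemma \ref{lem:convergenceofweights} the identity $P=\Delta^{-1}\bar P\Delta$, where $\bar P=\tfrac12(I+A\Delta^{-1})$ is column stochastic. The natural candidate is $\mathbf v(t):=\Delta\mathbf w(t)$, whose $i$-th entry equals $w_i(t)\,d_i^{\text{out}}$; since $\mathbf w(t)=P^t\mathbf w(0)$, this gives $\mathbf v(t)=\bar P^{\,t}\mathbf v(0)$, so $\mathbf v(t)$ evolves under a nonnegative, column-stochastic matrix.

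First I would note that, starting from nonnegative weights $w_i(0)$ and using that $P\ge 0$ entrywise, $\mathbf w(t)\ge 0$ and hence $\mathbf v(t)\ge 0$ for all $t$. Because $\bar P^{\,t}$ is column stochastic it preserves the $\ell_1$ norm of nonnegative vectors; equivalently, summing \eqref{eq:weightupdate} over $i$ and interchanging the two summations (using that each $j$ lies in $N^{\text{in}}(i)$ for exactly $d_j^{\text{out}}$ values of $i$) shows directly that $\sum_i w_i(t)\,d_i^{\text{out}}$ is independent of $t$. Combined with nonnegativity, this yields, for every $i$ and $t\ge 0$,
\[
w_i(t)\,d_i^{\text{out}}=v_i(t)\le\sum_{j}v_j(t)=\sum_{j}v_j(0)=\sum_{j}d_j^{\text{out}}w_j(0),
\]
and invoking the hypothesis $w_j(0)\le(1/d^*)^{2D+1}$ bounds the last sum by $\big(\sum_j d_j^{\text{out}}\big)(1/d^*)^{2D+1}=|E|\,(d^*)^{-(2D+1)}$.

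It then remains to establish the purely combinatorial inequality $|E|<(d^*)^{2D+1}$, which is where Assumption \ref{assump:strongconnectivity} and the diameter enter: a breadth-first exploration from any fixed node carries at most $(d^*)^k$ vertices at depth $k$ and reaches all of $V$ by depth $D$, so $n\le\sum_{k=0}^{D}(d^*)^k$ and hence $|E|\le n\,d^*\le d^*\sum_{k=0}^{D}(d^*)^k<(d^*)^{2D+1}$. Substituting this bound gives $w_i(t)\,d_i^{\text{out}}<1$ for all $i$ and $t\ge 0$. I expect this counting step to be the only real obstacle; the invariance of $\sum_i w_i(t)\,d_i^{\text{out}}$ and the nonnegativity of the iterates are both immediate once the change of variables $\mathbf v=\Delta\mathbf w$ is in place. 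The one situation calling for a little extra care is the degenerate small-out-degree regime (for instance $d^*=1$, when $G$ is a directed cycle and $P$ is doubly stochastic), where the crude estimate $|E|\le n\,d^*$ is too lossy and one instead notes directly that $\|\mathbf w(t)\|_{\infty}$ is nonincreasing.
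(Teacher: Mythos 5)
Your argument is correct in substance, but it takes a genuinely different route from the paper's. The paper works with $P$ directly: it bounds the ratio of the largest to smallest entry of the Perron vector of $P$ by $(d^*)^{2D}$, deduces the uniform row-sum bound $\sum_{j}[P^t]_{ij}\le (d^*)^{2D}$, and concludes $w_i(t)d_i^{\text{out}}\le d_i^{\text{out}}(d^*)^{2D}\|\mathbf{w}(0)\|_\infty<1$. You instead conjugate to the column-stochastic matrix $\bar P=\Delta P\Delta^{-1}$ (equivalently, multiply the update by $d_i^{\text{out}}$ before summing over $i$), observe that $\sum_i d_i^{\text{out}}w_i(t)$ is conserved, bound each nonnegative coordinate by this conserved total, and reduce the lemma to the counting inequality $|E|<(d^*)^{2D+1}$ via a breadth-first/diameter argument. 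Your route avoids Perron--Frobenius eigenvector estimates altogether (and hence also sidesteps the paper's garbled eigenvector identity \eqref{eq:templemboundperronvector}), resting only on conservation plus elementary counting; the paper's route gives a per-coordinate bound on the rows of $P^t$ that needs no degree/diameter count and covers the degenerate case $d^*=1$ without a separate argument. Two caveats on your write-up: (i) the final strict inequality $d^*\sum_{k=0}^{D}(d^*)^k<(d^*)^{2D+1}$ is asserted rather than checked; it does hold for $d^*\ge 2$, e.g. $\sum_{k=1}^{D+1}(d^*)^k\le (d^*)^{D+2}-d^*<(d^*)^{D+2}\le(d^*)^{2D+1}$ for $D\ge 1$, but fails for $d^*=1$, exactly the case you flag; (ii) in that $d^*=1$ (directed cycle) case your fallback only yields $w_i(t)d_i^{\text{out}}\le 1$ rather than $<1$ --- but this is inherent to the statement: with the non-strict hypothesis $w_i(0)\le (1/d^*)^{2D+1}=1$ and all initial weights equal to one, the strict conclusion genuinely fails, and the paper's own proof quietly strengthens the hypothesis to $w_i(0)<(1/d^*)^{2D+1}$, under which your fallback closes as well. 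Like the paper, you also implicitly assume the initial weights are nonnegative.
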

\begin{proof}
Let $\mathbf{u}$ and $\mathbf{v}$ be left and right eigenvectors of $P$ corresponding to eigenvalue one; i.e. $\mathbf{u}' P= \mathbf{u}'$ and $P \mathbf{v} =\mathbf{v}$.  From Perron-Frobenius Theorem, we have $\lim_{t \to \infty} P^t = {\mathbf{v} \mathbf{u}'}$, where $\mathbf{u} > 0$, $\mathbf{v} > 0$, $\sum_{i=1}^n v_i=1$, and $\sum_{i=1}^n v_i u_i=1$. Next, we will bound the entries of $\mathbf{v}$.
Let $v_M= \max_{i} v_i$ and $v_m= \min_i v_i$ denote the maximum and minimum entries among $v_i$'s. The $i$-th equation of $P\mathbf{v}=\mathbf{v}$ can be written as 
\begin{align}\label{eq:templemboundperronvector}
v_i= \sum_{k \in N^{\text{in}}(i)}\frac{1}{d_i^{\text{out}}-1} v_k ~~\text{ for all } k \in N^{\text{in}}(i).
\end{align}
Using \eqref{eq:templemboundperronvector}, for any $i$ we can bound $v_i$ by a factor of $v_k$ for $k \in N^{\text{in}}(i)$ as follows
\begin{align}\label{eq:templemboundperronvector2}
v_i \ge \frac{1}{d^*} v_k ~~\text{ for all } k \in N^{\text{in}}(i), 
\end{align}
where $d^* = \max_i d_i^{\text{out}}$. Using the above inequality, for any $k$, we obtain that $ v_i \ge \left( \frac{1}{d^*} \right)^{D} u_M, \text{ for all } i$, where $D$ is the the diameter of the graph. Using this inequality along with $\sum_{i=1}^n v_i=1$, we obtain \[1 = \sum_{i=1}^n v_i \ge v_M n \left( \frac{1}{d^*} \right)^{D},\] 
which yields to $v_M \le \frac{1}{{n}} \left( {d^*} \right)^D$. Next, we find a lower bound on the value of $v_m$. Again, using \eqref{eq:templemboundperronvector2}, we obtain that $v_m \ge \left( \frac{1}{d^*} \right)^D v_i, \text{ for all } i$.  
Using this inequality along with $\sum_{i=1}^n v_i=1$, we obtain $1 = \sum_{i=1}^n v_i \le v_m n {d^*}^{D}$, 
which yields to $v_m \ge \frac{1}{{n}} \left( \frac{1}{d^*} \right)^D$. 
\\Next, we will use the derived bounds on $v_m$ and $v_M$ in order to bound the summation of entries of each row of $P^t$.  Since $P \mathbf{v}=\mathbf{v}$, for any $t$, we have $P^t \mathbf{v}=\mathbf{v}$. Because the entries of $P^t$ are non-negative, for any $i$ we have 
\begin{align*}
v_M=\max_{1 \le i \le n} v_i \ge v_i = [P^t v]_i= \sum_{j=1}^n [P^t]_{ij} v_j \ge v_m \sum_{j=1}^n [P^t]_{ij}.
\end{align*}
Plugging in the bounds on $v_m$ and $v_M$, we obtain 
\[\sum_{j=1}^n [P^t]_{ij} \le \frac{v_M}{v_m} \le \frac{\frac{1}{n}{d^*}^D}{\frac{1}{n} (\frac{1}{d^*})^D}= {d^*}^{2D}.\] 
Therefore, if we let $w_i(0) < \frac{1}{{d^*}^{(2D+1)}}$ for all $i \in V$, then 
\begin{align*}
d_i^{\text{out}} w_i(t) =  d_i^{\text{out}} \sum_{j=1}^n [P^t]_{ij} w_j(0) \le d_i^{\text{out}}{d^*}^{2D} ||\mathbf{w}(0)||_{\infty} < 1,
\end{align*} 
and we obtain $1- w_i(t) d_i^{\text{out}} > 0$ for all $t \ge 0,$ and $i \in V$. 

\end{proof}



\vspace{-.1 in}
\section{Convergence of Algorithm}\label{sec:convergencealgorithm}
\subsection{Preliminary Results}
We can write the updates of our algorithm \eqref{eq:estimateupdate} in a compact form as
\begin{align}\label{eq:updatecompact}
\mathbf{x}(t+1)= Q(t) \mathbf{x}(t) - \alpha(t) \mathbf{g}(t),
\end{align}
where 
\begin{align*}
\mathbf{g}(t)=[g_1(t), \dots, g_n(t)]',
\end{align*}
\begin{align*}
\mathbf{x}(t)=[x_1(t), \dots, x_n(t)]', 
\end{align*}
and $Q(t)$ is a matrix such that $[Q(t)]_{ii}= 1- w_i(t) d_i^{\text{out}}$ and $[Q(t)]_{ij}= - w_j(t)$ for any $i$ and $j \in N^{\text{in}}(i)$.

Using \eqref{eq:updatecompact} recursively, we can relate the estimate at time $t+1$ to initial estimates and the intermediate matrices and subgradients as follows 
\begin{align}\label{eq:updatecompact2}
\mathbf{x}(t+1) & =  \left[ Q(t) Q(t-1) \dots Q(0) \right] \mathbf{x}(0)  \nonumber \\
& - \sum_{s=0}^{t-1} \alpha(s) \left[ Q(t) \dots Q(s+1) \right]  \mathbf{g}(s)  - \alpha(t) \mathbf{g}(t).
\end{align}
This motivates the following definition 
\begin{align*}
\Phi(t:s)= Q(t) \dots Q(s), ~~ \text{ for } t \ge s, 
\end{align*}
with the convention that $\Phi(t:t+1)=I$. Using this definition \eqref{eq:updatecompact2} can be written as 
\begin{align}\label{eq:tempxiupdtae}
\mathbf{x}(t+1) = \Phi(t:0) \mathbf{x}(0)  - \sum_{s=0}^t  \Phi(t:s+1) \mathbf{g}(s)  \alpha(s).
\end{align}

Next, we show that for any $t$ the matrix $Q(t)$ is column stochastic and becomes doubly stochastic only in the limit (i.e., $Q$ is doubly stochastic, where $\lim_{t \to \infty} Q(t)=Q$) and our analysis shows this suffices to guarantee the entries of $\Phi(t:s)$ converge to $\frac{1}{n}$ exponentially fast as $t-s$ grows large. In the analysis that follows, we will use Theorem 4.14 and Theorem 4.19 of \cite{seneta2006non}. 
\begin{proposition}[\cite{seneta2006non}]\label{pro:senata}
\begin{enumerate}
\item Let $Q(t)$ be a sequence of column stochastic matrices such that $\lim_{t \to \infty} Q(t)= Q$ (entry-wise),  where $Q$ is a primitive matrix. Then, for any $s$, we have that $\lim_{t \to \infty} [\Phi(t:s)]_{ij}= p_j$, where $\mathbf{p}$ is the unique probability vector (non-negative with summation equal to one) such that $Q\mathbf{p}=\mathbf{p}$. 
\item Let $Q(t)$ be a sequence of column stochastic primitive  matrices such that $\min^{+}_{ij} [Q(t)]_{ij} \ge \gamma$, where $\min^{+}$ denotes the minimum among positive entries.  Then there exist a probability vector $\mathbf{p}(s)$ such that 
\begin{align*}
|[\Phi(t:s)]_{ij}- p_j(s)| \le \frac{1}{1-\gamma}  (1- \gamma)^{t-s+1}.
\end{align*}
\end{enumerate}
\end{proposition}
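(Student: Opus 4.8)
The plan is to deduce both parts from the classical theory of inhomogeneous (time--varying) products of stochastic matrices --- precisely Theorems~4.14 and~4.19 of \cite{seneta2006non} --- after a single change of convention. Our matrices $Q(t)$ are column stochastic and our products $\Phi(t:s)=Q(t)Q(t-1)\cdots Q(s)$ accumulate factors on the left, whereas those theorems are stated for forward products of row stochastic matrices. Both mismatches vanish upon transposing: set $R(\tau)=Q(\tau)'$, so each $R(\tau)$ is row stochastic and $\Phi(t:s)'=R(s)R(s+1)\cdots R(t)$ is an ordinary forward product. Transposition preserves nonnegativity and primitivity, so $R(\tau)$ is primitive when $Q(\tau)$ is and $\lim_t R(t)=Q'$ is primitive when $Q$ is; and if $\mathbf p$ is the Perron vector of $Q$ (the unique probability vector with $Q\mathbf p=\mathbf p$) then $\mathbf p'R=\mathbf p'Q'=(Q\mathbf p)'=\mathbf p'$, so $\mathbf p$ is the stationary distribution of the row stochastic limit $R$. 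With this dictionary, part~1 is Theorem~4.14 of \cite{seneta2006non} applied to $R(s)\cdots R(t)$, whose conclusion that the product tends to $\mathbf 1\mathbf p'$ (with $\mathbf 1$ the vector of ones) transposes to: every column of $\Phi(t:s)$ converges to $\mathbf p$; and part~2 is Theorem~4.19 of \cite{seneta2006non}, whose quantitative weak--ergodicity estimate transposes to the stated geometric bound with the stated constants.

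For the idea behind those two facts I would use Dobrushin's ergodicity coefficient $\delta(M)=\tfrac12\max_{i,k}\sum_j|M_{ij}-M_{kj}|\in[0,1]$ of a row stochastic matrix $M$, which is submultiplicative, $\delta(M_1M_2)\le\delta(M_1)\delta(M_2)$, and measures how far the rows of $M$ are from being equal. \emph{Part 1:} primitivity of $Q$ gives an integer $k$ with $Q^k>0$, hence $(Q')^k>0$ with all entries at least some $c_0>0$; since $R(\tau)\to Q'$ entrywise there is $\tau_0$ such that for all $\tau\ge\tau_0$ the block $R(\tau)R(\tau+1)\cdots R(\tau+k-1)$ is strictly positive with entries at least $c_0/2$, so $\delta$ of each such block is at most $1-nc_0/2<1$. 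Chaining $\lfloor(t-\tau_0)/k\rfloor$ blocks and using submultiplicativity shows $\delta(R(\tau_0)\cdots R(t))\to0$, and absorbing the fixed initial factor $R(s)\cdots R(\tau_0-1)$ gives $\delta(R(s)\cdots R(t))\to0$, i.e. weak ergodicity. Because the factors also converge to the fixed matrix $Q'$, this upgrades to genuine convergence: the rows of $\Phi(t:s)'$ approach a common vector, and passing to the limit in $\Phi(t+1:s)'=\Phi(t:s)'R(t+1)$ (using $R(t)\to Q'$) forces that common vector to be $\mathbf p'$; independence of the limit from $s$ follows from $\Phi(t:s)'=R(s)\Phi(t:s+1)'$ and $R(s)\mathbf 1=\mathbf 1$. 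Transposing back gives the claim.

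\emph{Part 2:} the abstract statement is Theorem~4.19 of \cite{seneta2006non}; the block argument recovers it in the case relevant here, where the $Q(t)$ all share the sparsity pattern of $G$ with positive diagonal entries (Lemma~\ref{lem:smallwbeta}), a pattern that is primitive by strong connectivity. Then there is a fixed window length $\ell$ such that every product of $\ell$ consecutive $R(\tau)$'s is strictly positive with all entries at least $\gamma^{\ell}$, so $\delta$ of each such block is at most $1-n\gamma^{\ell}<1$; submultiplicativity over $\lfloor(t-s+1)/\ell\rfloor$ blocks yields $\delta(R(s)\cdots R(t))\le C\beta^{\,t-s}$ with $C$ and $\beta\in(0,1)$ depending only on $n,\gamma,\ell$. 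Defining $p_j(s)=\lim_{t\to\infty}[\Phi(t:s)]_{ij}$ (which exists by the argument of part~1 and depends on $s$) and letting $t\to\infty$ in the row--difference estimate, which is uniform in $t$, gives $|[\Phi(t:s)]_{ij}-p_j(s)|\le C\beta^{\,t-s}$; pinning $C,\beta$ down to $1/(1-\gamma)$ and $1-\gamma$ is exactly the explicit estimate carried out in Theorem~4.19 of \cite{seneta2006non}, and the clean single--step form of the bound suggests that its hypothesis is really strict positivity of the factors.

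The step I expect to be the main obstacle is precisely this uniform contraction of $\delta$ over blocks of bounded length. A single primitive matrix need not be scrambling --- its Dobrushin coefficient can equal $1$, for instance when two rows have disjoint support --- so one cannot contract one factor at a time, and both the window length and the per--block contraction factor must be chosen uniformly in $t$: in part~1 this uniformity is supplied by $Q(t)\to Q$ together with $Q^k>0$, and in part~2 by the common primitive support pattern together with the uniform lower bound $\gamma$. Converting the resulting geometric weak--ergodicity decay into the sharp constant $\tfrac{1}{1-\gamma}(1-\gamma)^{t-s+1}$ is the one genuinely delicate computation, which is why we simply invoke \cite{seneta2006non}.
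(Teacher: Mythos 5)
The paper does not prove this proposition at all: it is imported verbatim from Theorems 4.14 and 4.19 of \cite{seneta2006non}, exactly the two results you invoke, so your proposal takes essentially the same approach. What you add --- the transposition dictionary between column-stochastic backward products and row-stochastic forward products, and the Dobrushin-coefficient sketch of why those theorems hold --- is sound supplementary material on top of the same citation the paper relies on.
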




\vspace{-.03 in}
\begin{lemma}\label{lem:matrixconvergence}
There exist constants $C$ and $\lambda$ such that for any $t \ge s$ and $i, j \in V$, we have 
\begin{align}
|[\Phi(t:s)]_{ij}- \frac{1}{n}| \le C \lambda^{(t-s+1)}. 
\end{align}
\end{lemma}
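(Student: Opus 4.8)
The plan is to combine the two parts of Proposition~\ref{pro:senata} with the structural facts about $Q(t)$ established in Lemmas~\ref{lem:convergenceofweights} and \ref{lem:smallwbeta}. First I would verify the two hypotheses needed to invoke the proposition: (i) each $Q(t)$ is column stochastic, and (ii) its positive entries are uniformly bounded below by some $\gamma>0$. Column stochasticity of $Q(t)$ follows by summing the $j$th column: the diagonal entry contributes $1 - w_j(t) d_j^{\text{out}}$ and the off-diagonal entries $-w_j(t)$ appear once for each $i$ with $j \in N^{\text{in}}(i)$, i.e., exactly $d_j^{\text{out}}$ times, so the column sum is $1 - w_j(t) d_j^{\text{out}} + d_j^{\text{out}} w_j(t) = 1$. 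Wait --- note the off-diagonal entries of $Q(t)$ are $-w_j(t)$, which are negative, so $Q(t)$ is not a nonnegative matrix. This is the crux of the issue and means Proposition~\ref{pro:senata} cannot be applied to $Q(t)$ directly.

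The natural fix is a sign-change similarity. Define $\tilde Q(t) = S\,Q(t)\,S$ where $S = \operatorname{diag}((-1)^{?})$ --- but a single diagonal sign matrix only works if the sign pattern of $Q(t)$ is ``balanced'' in the bipartite sense, which it need not be. So instead I would argue more carefully about what the product $\Phi(t:s)$ actually is. Observe that $Q(t) = I - W(t) L$-type structure: writing $Q(t) = I - w_i(t)d_i^{\text{out}}$ on the diagonal and $-w_j(t)$ off-diagonal for $j\in N^{\mathrm{in}}(i)$ means $Q(t)^\top$ has $j$th row summing to $1$, with the off-diagonal entries of the $j$th column of $Q(t)$, namely $-w_j(t)$ for all $i$ with $j\in N^{\mathrm{in}}(i)$; since by Lemma~\ref{lem:smallwbeta} we have $1 - w_i(t)d_i^{\text{out}} > 0$, the diagonal of $Q(t)$ is positive, but the off-diagonals are negative. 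The correct observation (the one the paper intends) is that the \emph{relevant} matrix governing how estimates mix is obtained after a diagonal rescaling. Actually, re-reading the setup: I believe the intended route is that $Q(t)$ should be compared to a column-stochastic nonnegative matrix. Let me reconsider: the statement as written requires $[\Phi(t:s)]_{ij} \to 1/n$, so the limiting matrix $Q = \lim Q(t)$ must satisfy $Q\mathbf 1 = \mathbf 1$ with Perron vector $\tfrac1n\mathbf 1$, i.e. $Q$ is doubly stochastic. Since $Q$ has the balancing weights $w_i$ in it and $w_i d_i^{\mathrm{out}} = \sum_{j\in N^{\mathrm{in}}(i)} w_j$, the $i$th row of $Q$ sums to $1 - w_i d_i^{\mathrm{out}} + (-1)\sum_{j\in N^{\mathrm{in}}(i)} w_j$, which is $1 - 2 w_i d_i^{\mathrm{out}}$, not $1$. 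So the off-diagonal signs must in fact be $+$, not $-$; I would treat the ``$-w_j(t)$'' as a typo for ``$+w_j(t)$'' consistent with update \eqref{eq:estimateupdate} (which has a $+\sum w_j(t)x_j(t)$ term). With that reading, $Q(t)$ is genuinely nonnegative (diagonal positive by Lemma~\ref{lem:smallwbeta}, off-diagonals $w_j(t)\ge 0$), each row and column sums appropriately, and the column sum is $1 - w_j(t)d_j^{\mathrm{out}} + d_j^{\mathrm{out}}w_j(t) = 1$, so $Q(t)$ is column stochastic.

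With nonnegativity and column stochasticity in hand, I would proceed as follows. Step 1: $Q(t) \to Q$ entry-wise, since $w(t)\to w$ by Lemma~\ref{lem:convergenceofweights} and $Q$'s entries are continuous functions of the $w_i$'s; moreover $Q$ is doubly stochastic (columns sum to $1$ as above; rows sum to $1$ because $w$ balances the graph) and primitive (positive diagonal plus strong connectivity, exactly as for $P$ in Lemma~\ref{lem:convergenceofweights}). Hence by Part~1 of Proposition~\ref{pro:senata}, $[\Phi(t:s)]_{ij}\to p_j$ where $Qp = p$; since $Q$ is doubly stochastic and primitive, $p = \tfrac1n\mathbf 1$, giving the correct limit $1/n$. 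Step 2: to get the geometric \emph{rate}, I need a uniform lower bound $\gamma$ on the positive entries of $Q(t)$. The positive off-diagonal entries are $w_j(t) = \sum_k [P^t]_{jk} w_k(0)$; since $P$ is primitive and $P^t\to vu'$ with $v,u>0$, these stay bounded away from $0$ for all $t$ (the entries $[P^t]_{jk}$ along an edge are $\ge$ some positive constant for all large $t$, and there are only finitely many small $t$), and the diagonal entries $1 - w_i(t)d_i^{\mathrm{out}}$ are bounded below by a positive constant, e.g. $1 - d^* {d^*}^{2D}\|w(0)\|_\infty > 0$ strictly by the choice in Lemma~\ref{lem:smallwbeta} (shrinking $w(0)$ slightly if needed). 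So some $\gamma>0$ works uniformly in $t$. Step 3: apply Part~2 of Proposition~\ref{pro:senata} to get $|[\Phi(t:s)]_{ij} - p_j(s)| \le \tfrac{1}{1-\gamma}(1-\gamma)^{t-s+1}$ for some probability vectors $p(s)$; then combine with Step~1 --- since each fixed column of $\Phi(t:s)$ converges to $1/n$ as $t\to\infty$ and $\Phi(t:s) = \Phi(t:s')\Phi(s'-1:s)$ --- to upgrade $p_j(s)$ to $1/n$ uniformly, yielding $|[\Phi(t:s)]_{ij} - \tfrac1n| \le C\lambda^{t-s+1}$ with $C = \tfrac{1}{1-\gamma}$ and $\lambda = 1-\gamma$.

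The main obstacle is Step~2: establishing the \emph{uniform-in-$t$} positive lower bound $\gamma$ on the entries of $Q(t)$, because $w_i(t)$ is not monotone and one must control it over all times simultaneously using the Perron--Frobenius structure of $P$ (the bound $\sum_j [P^t]_{ij}\le {d^*}^{2D}$ from Lemma~\ref{lem:smallwbeta} controls column sums from above, but here I also need the individual edge-entries of $P^t$ bounded \emph{below}, which requires a separate primitivity/strong-connectivity argument for small $t$ and the $P^t\to vu'>0$ convergence for large $t$). A secondary subtlety is reconciling $p_j(s)$ from Part~2 with the value $1/n$ from Part~1, which I handle via the semigroup identity $\Phi(t:s)=\Phi(t:r)\Phi(r-1:s)$ and letting $t\to\infty$ with $r$ fixed.
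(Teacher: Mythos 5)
Your proposal is correct and follows essentially the same route as the paper: establish nonnegativity (via Lemma~\ref{lem:smallwbeta}) and column stochasticity of $Q(t)$, primitivity and double stochasticity of the limit $Q$ (via the balancing weights of Lemma~\ref{lem:convergenceofweights}), obtain a uniform positive lower bound $\gamma$ on the positive entries by combining convergence to a positive limit with the finitely many initial times, and then invoke both parts of Proposition~\ref{pro:senata} to identify the limit $\frac{1}{n}$ and the geometric rate with $C=\frac{1}{1-\gamma}$, $\lambda=1-\gamma$. Your reading of the off-diagonal entries as $+w_j(t)$ (a sign typo in the paper's definition of $Q(t)$) is exactly what the paper's own proof uses, and the ``main obstacle'' you flag is handled in the paper precisely by the simple argument you describe (large $t$ from $Q(t)\to Q$, small $t$ by finiteness), so no additional machinery is needed.
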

\begin{proof} 
 Using Lemma \ref{lem:smallwbeta}, since for any $i$,  $d_i^{\text{out}}  w_i(t) < 1$, all the entries of matrices $Q(t)$ are non-negative for $t \ge 0$. Furthermore, for any $t$ the matrix $Q(t)$ is column stochastic because for any $i$, $\sum_{i=1}^n Q_{ij}(t)= 1-  w_j(t) d_j^{\text{out}} + \sum_{i:~ j \in N^{\text{in}}(i)}  w_j(t)  = 1$. 
Using Lemma \ref{lem:convergenceofweights},  $\lim_{t \to \infty} Q(t)= Q$, where $Q_{ii}=1- w_i d_i^{\text{out}}$ and $Q_{ij}= w_i$ for $j \in N^{\text{in}}(i)$. The matrix $Q$ is row stochastic as well because for  any $j$, $\sum_{i=1}^n Q_{ij}= 1- d_j^{\text{out}}  w_j + \sum_{i: ~ j \in N^{\text{in}}(i)}  w_j= 1$.  
Since $Q$ is both column and row stochastic the unique probability vector $\mathbf{p}$ for which $Q\mathbf{p}=\mathbf{p}$ holds, is a vector with all entries equal to $\frac{1}{n}$. Also note that since all entries of $Q$ are non-negative, the diagonal entries of $Q$ are positive, and the underlying graph is strongly connected, we know that the matrix $Q$ is primitive (with the same argument $Q(t)$ is primitive for any $t$). 
\\We now have all the conditions to use the first part of Proposition \ref{pro:senata}, to obtain  $\lim_{t \to \infty} [\Phi(s:t)]_{ij}=\frac{1}{n}$, for all $i, j\in V$. 
\\Next, we use the second part of Proposition \ref{pro:senata} in order to establish the exponential rate of convergence. 
 Let $\delta= \min^{+}_{ij} Q_{ij}$. Since $\lim_{t \to \infty} Q(t)= Q$, there exists $t_0$ such that for any $t \ge t_0$, we have that $|| Q(t)- Q||_{\infty} \le \frac{\delta}{2}$. Therefore, for $t \ge t_0$, we have that $\min^{+}[Q(t)]_{ij} \ge \frac{\delta}{2}$. We let   $\gamma= \min\{\frac{\delta}{2}, \min^{+}_{t \le t_0, ~ i, j} [Q(t)]_{ij}\}$ to obtain $\min^{+}_{ij} [Q(t)]_{ij} \ge \gamma$, for any $t$. Now we can use the second part of Proposition \ref{pro:senata} to obtain that for any $s$, 
\begin{align*}
|[\Phi(t:s)]_{ij}- \frac{1}{n}| \le \frac{1}{1-\gamma}  (1- \gamma)^{t-s+1}.
\end{align*}
This completes the proof. 
\end{proof}
\begin{remark}
\textup{
When $f_i(x)=0$ for all $i \in V$ and all $x\in \mathbb{R}$ (implying $g_i(t)=0$, for all $i \in V$ and all $t \ge 0$), the update step \eqref{eq:estimateupdate} simplifies to 
\begin{align}\label{eq:consensuupdate}
 x_i(t+1) =  x_i(t) \left( 1- w_i(t) d_i^{\text{out}} \right)  +  \sum_{j \in N^{\text{in}}(i)} w_j(t)  x_j(t) .
\end{align}
For this special case combining \eqref{eq:tempxiupdtae} with Lemma \ref{lem:matrixconvergence} shows that the estimates $x_i(t)$ generated by algorithm \eqref{eq:estimateupdate}-\eqref{eq:weightupdate} converges to the average of the initial values, i.e., $\lim_{t \to \infty} x_i(t)= \frac{1}{n} \sum_{i=1}^n x_{i}(0)$, 
and the rate of convergence is exponential. Hence, our algorithm provides a new distributed method for computing average of initial values over a directed graph (see \cite{priolo2014distributed, dominguez2013distributed} for similar algorithms). 
}
\end{remark}
\vspace{-.05 in}
\subsection{Consensus in Estimates}
We will first show that under some mild assumptions on the step size sequence, the disagreement between estimates of agents goes to zero, i.e., $\lim_{t \to \infty} |x_i(t)-x_j(t)|=0$, for all $i, j \in V$. To that end, define an auxiliary sequence as 
\begin{align}\label{eq:tempytrecur}
y(t+1)= y(t)-\frac{\alpha(t)}{n}\sum_{i=1}^n g_i(t).
\end{align}
We let $y(0)= \frac{1}{n} \sum_{i=1}^n x_i(0)$ to obtain 
\begin{align}\label{eq:tempyt}
y(t)= \frac{1}{n} \sum_{i=1}^n x_i(0) - \frac{1}{n} \sum_{s=0}^{t-1} \alpha(s) \sum_{i=1}^n g_i(s).
\end{align}
Using the compact form \eqref{eq:tempxiupdtae}, the update of node $i$ at time $t+1$ can be written as 
\begin{align}\label{eq:tempxiupdtae2}
x_i(t+1) = \left[ \Phi(t:0) \mathbf{x}(0) \right]_i - \sum_{s=0}^t \left[ \Phi(t:s+1) \mathbf{g}(s) \right]_i \alpha(s). 
\end{align}
We establish the convergence rate of our algorithm using the ergodic average of the sequence $\{x_i(t)\}$ generated by algorithm \eqref{eq:estimateupdate}-\eqref{eq:weightupdate}, defined as  
\begin{align}
\hat{x}_i(T) = \frac{1}{\sum_{t=0}^T \alpha(t)} \sum_{t=0}^T \alpha(t) x_i(t),
\end{align}
and 
\begin{align}
{\hat{y}}(T)= \frac{1}{\sum_{t=0}^T \alpha(t)} \sum_{t=0}^T \alpha(t) {y}(t).
\end{align}
\begin{assumption}
\textup{
Suppose that the functions $f_i$ have bounded subgradient, i.e., there exists $L$ such that for any $i \in V$ and $x$
\begin{align*}
|g_i| \le L, ~ \text{ for any }  g_i \in \partial f_i(x) \text{ and any } x. 
\end{align*}
}
\end{assumption}


\begin{lemma} \label{lem:feasibility}
Let the sequence $\{\alpha(t)\}_{t=0}^{\infty}$ be non-negative. For all $i \in V$, we have  
\begin{align}
|x_i(t)- y(t)| \le C \lambda^{t} ||\mathbf{x}(0)||_1 + n L \sum_{s=0}^{t-1} C \lambda^{t-s-1} \alpha(s).
\end{align}
Moreover, if the sequence $\{\alpha(t)\}_{t=0}^{\infty}$ is non-increasing and converges to zero, then for any $i$ and $j$, 
\begin{align*}
\lim_{t \to \infty} |x_i(t) - x_j(t)|=0.
\end{align*} 
\end{lemma}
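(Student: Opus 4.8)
The plan is to bound $|x_i(t)-y(t)|$ by exploiting the representation \eqref{eq:tempxiupdtae2} for $x_i(t+1)$ together with the explicit formula \eqref{eq:tempyt} for $y(t)$, and then to control each term using the exponential mixing bound of Lemma \ref{lem:matrixconvergence}. Concretely, I would write
$x_i(t+1) = [\Phi(t:0)\mathbf{x}(0)]_i - \sum_{s=0}^t [\Phi(t:s+1)\mathbf{g}(s)]_i \alpha(s)$
and compare with $y(t+1) = \frac{1}{n}\sum_j x_j(0) - \sum_{s=0}^t \frac{\alpha(s)}{n}\sum_j g_j(s)$, which I obtain by unrolling \eqref{eq:tempytrecur}. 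Subtracting, the difference splits into the ``initial-condition'' term $\big[\Phi(t:0)\mathbf{x}(0)\big]_i - \frac1n\sum_j x_j(0) = \sum_j \big([\Phi(t:0)]_{ij} - \frac1n\big) x_j(0)$ and, for each $s$, the ``subgradient'' term $\sum_j \big([\Phi(t:s+1)]_{ij} - \frac1n\big) g_j(s)\,\alpha(s)$.

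The key step is then to apply Lemma \ref{lem:matrixconvergence}, which gives $|[\Phi(t:s)]_{ij} - \frac1n| \le C\lambda^{t-s+1}$, to each entry appearing above. For the initial term I would bound $\big|\sum_j([\Phi(t:0)]_{ij}-\frac1n)x_j(0)\big| \le \max_j|[\Phi(t:0)]_{ij}-\frac1n|\cdot\|\mathbf{x}(0)\|_1 \le C\lambda^{t+1}\|\mathbf{x}(0)\|_1$, which after reindexing from $t+1$ to $t$ matches the claimed $C\lambda^{t}\|\mathbf{x}(0)\|_1$. For the subgradient term, using $|g_j(s)|\le L$ (Assumption on bounded subgradients) and the bound $|[\Phi(t:s+1)]_{ij}-\frac1n| \le C\lambda^{t-s}$, I get $\big|\sum_j([\Phi(t:s+1)]_{ij}-\frac1n)g_j(s)\big| \le nLC\lambda^{t-s}$, so the total over $s=0,\dots,t$ is at most $nL\sum_{s=0}^{t-1} C\lambda^{t-s-1}\alpha(s)$ (the $s=t$ term, where $\Phi(t:t+1)=I$, gives an exact cancellation $[\,I\,]_{ij} - \frac1n$ against $\frac1n\sum_j$... — I should double-check the indexing here so the sum stops at $t-1$ as stated). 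Combining the two bounds yields the first displayed inequality. A small technical point worth being careful about is the exact offset in the exponent of $\lambda$ and the upper limit of the sum, since the convention $\Phi(t:t+1)=I$ and the shift between $x_i(t+1)$ and $x_i(t)$ must be tracked consistently; this bookkeeping is the only real subtlety and is routine once set up.

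For the second claim, I would show $\lim_{t\to\infty}|x_i(t)-y(t)| = 0$ for every $i$; the triangle inequality $|x_i(t)-x_j(t)| \le |x_i(t)-y(t)| + |y(t)-x_j(t)|$ then finishes it. The first term $C\lambda^t\|\mathbf{x}(0)\|_1 \to 0$ since $\lambda\in(0,1)$. For the second term $nL\sum_{s=0}^{t-1} C\lambda^{t-s-1}\alpha(s)$, I would argue it vanishes because $\alpha(s)\to 0$: this is a convolution of a summable geometric sequence with a sequence tending to zero, so split the sum at some $s = t/2$ (say), bound the high-$s$ part by $\max_{s\ge t/2}\alpha(s)\cdot\sum_k C\lambda^k$ which is small for large $t$, and bound the low-$s$ part by $\alpha(0)\cdot\sum_{k\ge t/2} C\lambda^k$ which is small for large $t$ since the geometric tail decays; monotonicity of $\{\alpha(t)\}$ makes $\max_{s\ge t/2}\alpha(s) = \alpha(\lceil t/2\rceil)$, simplifying the estimate. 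The main obstacle, such as it is, is not conceptual but lies in getting the index shifts in the geometric exponents exactly right so the stated inequality is reproduced verbatim; the convergence-to-zero argument is a standard ``geometric-times-null-sequence'' lemma.
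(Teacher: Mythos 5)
Your proposal follows essentially the same route as the paper: the same decomposition of $x_i(t)-y(t)$ via \eqref{eq:tempxiupdtae2} and \eqref{eq:tempyt}, the same entrywise application of Lemma \ref{lem:matrixconvergence} with a H\"older-type bound against $\|\mathbf{x}(0)\|_1$ and $|g_j(s)|\le L$, and the same splitting of the geometric convolution $\sum_{s}\lambda^{t-1-s}\alpha(s)$ to show it vanishes when $\alpha(t)\to 0$. The only slip is the claimed ``exact cancellation'' at $s=t$: the term $[I]_{ij}-\tfrac1n$ does not vanish, but since $|[I]_{ij}-\tfrac1n|\le 1\le C$ it is simply absorbed as the $\lambda^{0}$ summand, which is precisely why the sum in the stated bound ends at $s=t-1$ (the paper glosses over the same indexing point).
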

\begin{proof}

Using \eqref{eq:tempxiupdtae2}, \eqref{eq:tempyt}, and H\"{o}lder's inequality, we have 
\begin{align*}
& |x_i(t)- y(t)| = |\left[ \Phi(t-1:0) \mathbf{x}(0) \right]_i  \\
& -  \sum_{s=0}^{t-1} \left[ \Phi(t-1:s+1) \mathbf{g}(s) \right]_i \alpha(s) \\
& - \frac{1}{n} \sum_{i=1}^n x_i(0) + \frac{1}{n} \sum_{s=0}^{t-1} \alpha(s) \sum_{i=1}^n g_i(s)| \\
& \le  \max_{j}|[\Phi(t-1:0)]_{ij}- \frac{1}{n}| \times ||\mathbf{x}(0)||_1 \\
& + n \sum_{s=0}^{t-1} \alpha(s) L \max_{j}|[\Phi(t-1:s+1)]_{ij}- \frac{1}{n}|\\
& \le C \lambda^{t} ||\mathbf{x}(0)||_1 + n L \sum_{s=0}^{t-1} C \lambda^{t-1-s} \alpha(s),
\end{align*}
where we used Lemma \ref{lem:matrixconvergence} to obtain the last inequality. For a given $\epsilon$, let $t_0$ be such that for $t \ge t_0$, $\alpha(t) \le \frac{\epsilon (1- \lambda)}{2}$ to obtain
\begin{align*}
&\sum_{s=0}^{t-1} \alpha(s) \lambda^{t-1 - s} = \sum_{s=0}^{t_0} \alpha(s) \lambda^{t-1 - s} + \sum_{s=t_0+1}^{t-1} \alpha(s) \lambda^{t-1 - s} \\ 
& \le \left( \max_{ 0 \le s \le t_0} \alpha(s) \right) \lambda^ {-t_0} \lambda^{t-1} \sum_{s=0}^{t_0} \lambda^{s} \\
& + \frac{\epsilon (1- \lambda)}{2} \sum_{s=t_0+1}^{t-1} \lambda^{t-1 - s} \le  \alpha(0) \lambda^ {-t_0} \frac{1}{1-\lambda}  \lambda^{t-1}  + \frac{\epsilon}{2}.
\end{align*}
Using this relation, we have  
$\lim_{t \to \infty}\sum_{s=0}^{t-1} \alpha(s) \lambda^{t-1 - s}  \le  \frac{\epsilon}{2}$. 
Therefore, for any $i, j$, we obtain 
\begin{align*}
\lim_{t \to \infty} |x_i(t)-x_j(t)| \le \left( \lim_{t \to \infty} 2 \lambda^t ||\mathbf{x}(0)||_1 \right)+ n LC \epsilon= nLC \epsilon.
\end{align*}
Since $\epsilon$ is arbitrary, by taking $\epsilon \to 0$, we conclude $\lim_{t \to \infty} |x_i(t)- x_j(t)|=0$. 
\end{proof}
Lemma \ref{lem:feasibility} shows the convergence of the sequences $\{x_i(t)-x_j(t)\}_{t=0}^{\infty}$ to zero. Next, we bound the difference between the ergodic average sequences $\{\hat{x}_i(t)\}_{t=0}^{\infty}$ and $\{\hat{y}(t)\}_{t=0}^{\infty}$. 
\begin{theorem}\label{th:feasibility}
The ergodic average of the sequence generated by algorithm \eqref{eq:estimateupdate}-\eqref{eq:weightupdate}, satisfies
\begin{align}\label{eq:feasibility}
& |\hat{x}_i(T) - \hat{y}(T)|  \nonumber  \le \frac{2}{\sum_{t=0}^T \alpha(t)}\\
&  \sum_{t=0}^T \alpha(t)  \left( C  \lambda^{t} ||\mathbf{x}(0)||_1 + L n \sum_{s=0}^{t-1} C \lambda^{t-1-s} \alpha(s) \right)
\end{align}
\end{theorem}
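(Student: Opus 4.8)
The plan is to obtain this inequality directly from the per-iterate estimate in Lemma~\ref{lem:feasibility}, exploiting only that the ergodic averages $\hat{x}_i(T)$ and $\hat{y}(T)$ are weighted averages of the running iterates $x_i(t)$ and $y(t)$ with the \emph{same} nonnegative weights $\alpha(t)/\sum_{\tau=0}^{T}\alpha(\tau)$.

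First I would write, straight from the definitions of $\hat{x}_i(T)$ and $\hat{y}(T)$,
\[
\hat{x}_i(T) - \hat{y}(T) \;=\; \frac{1}{\sum_{t=0}^{T}\alpha(t)} \sum_{t=0}^{T} \alpha(t)\bigl(x_i(t) - y(t)\bigr).
\]
Since $\alpha(t)\ge 0$, the triangle inequality (equivalently, convexity of $|\cdot|$) yields
\[
|\hat{x}_i(T) - \hat{y}(T)| \;\le\; \frac{1}{\sum_{t=0}^{T}\alpha(t)} \sum_{t=0}^{T} \alpha(t)\,|x_i(t) - y(t)|.
\]
Then I would substitute the bound from Lemma~\ref{lem:feasibility}, namely $|x_i(t)-y(t)| \le C\lambda^{t}||\mathbf{x}(0)||_1 + nL\sum_{s=0}^{t-1} C\lambda^{t-s-1}\alpha(s)$, into this display; this already gives the asserted inequality with constant $1$ in front of the right-hand side, so keeping the factor $2$ in the statement only loosens the estimate and is harmless (it matches the slack used elsewhere and simplifies later bookkeeping). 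Note that for $t=0$ the inner sum is empty, so that term contributes just $C||\mathbf{x}(0)||_1$, consistent with the formula.

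There is essentially no obstacle here: the theorem is a one-line consequence of Lemma~\ref{lem:feasibility}, the only content being that passing to the $\alpha$-weighted ergodic average commutes with the per-iterate bound because the weights are nonnegative and identical for both sequences. The substantive work — controlling $\Phi(t:s)$ via Lemma~\ref{lem:matrixconvergence}, hence controlling $|x_i(t)-y(t)|$ — has already been carried out. What remains afterwards (and is where the $\log T$ factor will eventually appear) is to pick $\alpha(t)$, e.g. $\alpha(t)\asymp 1/\sqrt{t}$, and estimate $\sum_{t=0}^{T}\alpha(t)\gtrsim\sqrt{T}$ together with $\sum_{t=0}^{T}\alpha(t)\sum_{s=0}^{t-1}\lambda^{t-1-s}\alpha(s)\lesssim \sum_{t}\alpha(t)^2\lesssim\log T$, so that the right-hand side of \eqref{eq:feasibility} is $O(\tfrac{\log T}{\sqrt{T}})$.
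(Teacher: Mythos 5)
Your proposal is correct and follows essentially the same route as the paper: both use nonnegativity of $\alpha(t)$ (convexity of $|\cdot|$) to bound $|\hat{x}_i(T)-\hat{y}(T)|$ by the $\alpha$-weighted average of $|x_i(t)-y(t)|$ and then substitute the bound from Lemma~\ref{lem:feasibility}. Your observation that this yields the inequality with constant $1$, so the factor $2$ in the statement is just harmless slack, matches what the paper's own proof implicitly does as well.
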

\begin{proof}
Using convexity of norm, for any $i$, we have  
\begin{align*}
& |\hat{x}_i(T) - \hat{y}(T)| \le \frac{1}{\sum_{t=0}^T \alpha(t)} \sum_{t=0}^T \alpha(t) |x_i(t) - y(t)|.
\end{align*}
We next use Lemma \ref{lem:feasibility} to upper bound each term on the right hand side of the previous relation, which yields to
\begin{align*}
& |\hat{x}_i(T) - \hat{y}(T)| \le\\& \frac{1}{\sum_{t=0}^T \alpha(t)} \sum_{t=0}^T \alpha(t)  \left( C  \lambda^{t} ||\mathbf{x}(0)||_1 + L n \sum_{s=0}^{t-1} C \lambda^{t-1-s} \alpha(s) \right).
\end{align*}
\end{proof}
\vspace{- .1 in}
\subsection{Optimality Convergence}
Let $F(\mathbf{x}(t))=\sum_{i=1}^n f_i(x_i(t))$ and $F(\mathbf{x}^*)=\sum_{i=1}^n f_i(x^*)$, where $\mathbf{x}^*=(x^*, \dots, x^*) \in \mathbb{R}^n$ is an optimal solution of \eqref{eq:optfomulation}. Also let $\hat{\mathbf{x}}(T)=(\hat{x}_1(T), \dots, \hat{x}_n(T))$. 
\begin{lemma}\label{lem:optimality1}
 The ergodic average of the sequence generated by algorithm \eqref{eq:estimateupdate}-\eqref{eq:weightupdate}, satisfies 
\begin{align}
& F(\hat{\mathbf{x}}(T))- F(\hat{\mathbf{y}}(T))  \le   L \frac{1}{\sum_{t=0}^T \alpha(t)}  \nonumber \\
& \left( \sum_{t=0}^T \alpha(t) \left( C \lambda^{t} ||\mathbf{x}(0)||_1 + L n  \sum_{s=0}^{t-1} C \lambda^{t-s} \alpha(s) \right) \right)
\end{align}
\end{lemma}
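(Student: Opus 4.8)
The plan is to bound $F(\hat{\mathbf{x}}(T)) - F(\hat{\mathbf{y}}(T))$ by exploiting the Lipschitz property of each $f_i$ together with the feasibility (near-consensus) estimate from Lemma \ref{lem:feasibility}. Since $\hat{y}(T)$ is a scalar while $\hat{\mathbf{x}}(T)$ has components $\hat{x}_i(T)$, first I would write
\begin{align*}
F(\hat{\mathbf{x}}(T)) - F(\hat{\mathbf{y}}(T)) = \sum_{i=1}^n \left( f_i(\hat{x}_i(T)) - f_i(\hat{y}(T)) \right),
\end{align*}
and then bound each summand using the bounded-subgradient assumption: $|f_i(\hat{x}_i(T)) - f_i(\hat{y}(T))| \le L |\hat{x}_i(T) - \hat{y}(T)|$. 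This reduces the claim to controlling $\sum_{i=1}^n |\hat{x}_i(T) - \hat{y}(T)|$, which Theorem \ref{th:feasibility} already bounds termwise in $i$.

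Next I would apply Theorem \ref{th:feasibility} to each index $i$ and sum over $i = 1, \dots, n$. The per-$i$ bound in \eqref{eq:feasibility} does not actually depend on $i$ on the right-hand side, so summing over $i$ introduces a factor of $n$. Combining with the factor $L$ from the Lipschitz step gives
\begin{align*}
F(\hat{\mathbf{x}}(T)) - F(\hat{\mathbf{y}}(T)) \le \frac{2 L n}{\sum_{t=0}^T \alpha(t)} \sum_{t=0}^T \alpha(t) \left( C \lambda^{t} \|\mathbf{x}(0)\|_1 + L n \sum_{s=0}^{t-1} C \lambda^{t-1-s} \alpha(s) \right).
\end{align*}
One then matches this against the stated inequality: the claimed bound has $L$ (not $Ln$) as the outermost constant and $\lambda^{t-s}$ (rather than $\lambda^{t-1-s}$) in the inner sum, so some bookkeeping of the constants is needed — absorbing $n$ and the shift $\lambda^{-1}$ into $C$, or more carefully using the convexity-of-norm step directly on $F$ rather than passing through $\sum_i |\hat x_i - \hat y|$. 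The cleanest route is probably to invoke convexity of $F$ along the ergodic average (so that $F(\hat{\mathbf{x}}(T)) - F(\hat{\mathbf{y}}(T)) \le \frac{1}{\sum_t \alpha(t)} \sum_t \alpha(t) (F(\mathbf{x}(t)) - F(\mathbf{y}(t)))$ with $\mathbf{y}(t) = (y(t),\dots,y(t))$) and then bound $F(\mathbf{x}(t)) - F(\mathbf{y}(t)) \le L \sum_i |x_i(t) - y(t)|$ via Lemma \ref{lem:feasibility}, which directly produces the factor $Ln$ and the exponents as written up to the definition of $C$.

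The main obstacle I anticipate is purely cosmetic: reconciling the constants and the exponent shift between what the Lipschitz-plus-Lemma-\ref{lem:feasibility} argument naturally yields and the precise form stated in Lemma \ref{lem:optimality1}. There is no analytic difficulty here beyond Lemma \ref{lem:feasibility} and convexity — the substantive work (the exponential decay of $\Phi(t:s)$) has already been done in Lemma \ref{lem:matrixconvergence}. So the proof is essentially a two-line reduction, and the only care required is to state the constant $C$ (and the implicit factor of $n$) so that the displayed inequality is literally correct.
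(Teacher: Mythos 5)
Your proposal is correct and takes essentially the same route as the paper: the paper's proof writes $F(\hat{\mathbf{x}}(T))-F(\hat{\mathbf{y}}(T)) \le \sum_{j=1}^n g_j(\hat{x}_j(T))\,(\hat{x}_j(T)-\hat{y}(T)) \le L \sum_{j=1}^n |\hat{x}_j(T)-\hat{y}(T)|$ and then invokes Theorem \ref{th:feasibility}. The bookkeeping discrepancies you flag (the factor $n$ from summing over $j$, the factor $2$ in \eqref{eq:feasibility}, and the shift from $\lambda^{t-1-s}$ to $\lambda^{t-s}$) are in fact glossed over in the paper's own proof as well; they only affect constants, not the $O(\frac{\log T}{\sqrt{T}})$ rate.
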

\begin{proof}
Using bounded subgradient assumption and convexity of norm function, we obtain that
\begin{align*}
& F(\hat{\mathbf{x}}(T))- F(\hat{\mathbf{y}}(T))  \le \sum_{j=1}^n g_j(\hat{x}_j(T)) (\hat{x}_j(T)- \hat{y}(T)) \\
& \le \sum_{j=1}^n L |\hat{x}_j(T)- \hat{y}(T)| \le L \frac{1}{\sum_{t=0}^T \alpha(t)} \\
& \left( \sum_{t=0}^T \alpha(t) \left( C \lambda^{t} ||\mathbf{x}(0)||_1 + L  n \sum_{s=0}^{t-1} C \lambda^{t-s} \alpha(s) \right) \right),
\end{align*}
where we used Theorem \ref{th:feasibility} to obtain the last inequality.
\end{proof}
\begin{lemma}\label{lem:optimality2} 
The ergodic average of the sequence generated by algorithm \eqref{eq:estimateupdate}-\eqref{eq:weightupdate}, satisfies 
\begin{align}\label{eq:lem:optimality2}
& F(\hat{\mathbf{y}}(T))- F(\mathbf{x}^*) \le \frac{n}{2 \sum_{t=0}^T \alpha(t)} \left( y(0)-x^* \right)^2 \nonumber \\
& + \frac{n L^2}{2 \sum_{t=0}^T \alpha(t)} \sum_{t=0}^T {\alpha(t)^2} \nonumber\\
& + \frac{2 L}{ \sum_{t=0}^T \alpha(t)} \sum_{t=0}^T  \alpha(t) \sum_{i=1}^n |y(t)-x_i(t)|. 
\end{align}
\end{lemma}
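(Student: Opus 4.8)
The plan is to carry out the classical subgradient-method analysis on the scalar auxiliary sequence $\{y(t)\}$ from \eqref{eq:tempytrecur}, treating the consensus errors $|y(t)-x_i(t)|$ as controlled perturbations (their smallness is precisely what Lemma \ref{lem:feasibility} supplies). First I would square the update $y(t+1)=y(t)-\frac{\alpha(t)}{n}\sum_{i=1}^n g_i(t)$ around a minimizer $x^*$:
\[
(y(t+1)-x^*)^2 = (y(t)-x^*)^2 - \frac{2\alpha(t)}{n}\sum_{i=1}^n g_i(t)(y(t)-x^*) + \frac{\alpha(t)^2}{n^2}\Big(\sum_{i=1}^n g_i(t)\Big)^2,
\]
and bound the last term by $\alpha(t)^2 L^2$ using $\big|\sum_i g_i(t)\big|\le nL$, which follows from the bounded-subgradient assumption.

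The key step is the lower bound on the cross term. Splitting $g_i(t)(y(t)-x^*)=g_i(t)(y(t)-x_i(t))+g_i(t)(x_i(t)-x^*)$, the subgradient inequality for $f_i$ at $x_i(t)$ gives $g_i(t)(x_i(t)-x^*)\ge f_i(x_i(t))-f_i(x^*)$, while $|g_i(t)|\le L$ (hence $f_i$ is $L$-Lipschitz on $\mathbb{R}$) gives both $g_i(t)(y(t)-x_i(t))\ge -L|y(t)-x_i(t)|$ and $f_i(x_i(t))\ge f_i(y(t))-L|y(t)-x_i(t)|$. Summing over $i$ yields
\[
\sum_{i=1}^n g_i(t)(y(t)-x^*)\ \ge\ \sum_{i=1}^n f_i(y(t)) - F(\mathbf{x}^*) - 2L\sum_{i=1}^n |y(t)-x_i(t)|.
\]

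Substituting both bounds back into the squared-distance identity, rearranging to isolate $\frac{2\alpha(t)}{n}\big(\sum_i f_i(y(t))-F(\mathbf{x}^*)\big)$, summing over $t=0,\dots,T$ so that the distance terms telescope, and discarding $-(y(T+1)-x^*)^2\le 0$, I obtain
\[
\frac{2}{n}\sum_{t=0}^T \alpha(t)\Big(\sum_{i=1}^n f_i(y(t))-F(\mathbf{x}^*)\Big)\ \le\ (y(0)-x^*)^2 + L^2\sum_{t=0}^T\alpha(t)^2 + \frac{4L}{n}\sum_{t=0}^T\alpha(t)\sum_{i=1}^n|y(t)-x_i(t)|.
\]
Finally, since $F$ is convex (a sum of convex $f_i$) and $\hat{\mathbf{y}}(T)$ is the corresponding convex combination of the diagonal vectors $(y(t),\dots,y(t))$, Jensen's inequality gives $F(\hat{\mathbf{y}}(T))-F(\mathbf{x}^*)\le \frac{1}{\sum_t\alpha(t)}\sum_t\alpha(t)\big(\sum_i f_i(y(t))-F(\mathbf{x}^*)\big)$; dividing the displayed inequality by $\frac{2}{n}\sum_t\alpha(t)$ and combining then produces exactly \eqref{eq:lem:optimality2}.

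I do not anticipate a genuinely hard step: the substantive estimate — that the perturbation $\sum_i|y(t)-x_i(t)|$ is small — has already been isolated in Lemma \ref{lem:feasibility}, so what remains is standard. The only point requiring care is the constant bookkeeping: tracking the $\tfrac1n$ factors introduced by the averaging in the $y$-update, and invoking $L$-Lipschitzness twice (once to pass from $g_i(t)(y(t)-x_i(t))$ to $-L|y(t)-x_i(t)|$ and once to pass from $f_i(x_i(t))$ to $f_i(y(t))$), so that the consensus-error term carries the factor $2L$ rather than $L$; after dividing by $\frac{2}{n}\sum_t\alpha(t)$ this is what yields the $\frac{2L}{\sum_t\alpha(t)}$ coefficient in the statement.
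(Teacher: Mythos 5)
Your proposal is correct and follows essentially the same route as the paper's own proof: the same squared-distance recursion for $y(t)$ around $x^*$, the same splitting of $g_i(t)(y(t)-x^*)$ into a consensus-error term and a subgradient-inequality term (using $L$-Lipschitzness twice to get the factor $2L$), the same telescoping sum, and the same Jensen step to pass to $\hat{\mathbf{y}}(T)$. The constant bookkeeping also matches \eqref{eq:lem:optimality2} exactly.
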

\begin{proof}
Using \eqref{eq:tempytrecur}, we have that 
\begin{align}\label{eq:lemtemp1}
& \left(  y(t+1)-x^* \right)^2 = \left( y(t)-x^* \right)^2 + \frac{\alpha(t)^2}{n^2} \left( \sum_{i=1}^n g_i(t) \right)^2 \nonumber \\
& - \frac{2 \alpha(t)}{n} \sum_{i=1}^n g_i(t) (y(t)-x^*) .
\end{align}
Since $|g_i(t)| \le L$, we also have that 
\begin{align*}
& g_i(t) (y(t)-x^*)  = g_i(t) (y(t)-x_i(t)) + g_i(t) (x_i(t)-x^*) \\
& \ge g_i(t) (y(t)-x_i(t)) + f_i(x_i(t)) - f_i(x^*) \\
& \ge -L |y(t)-x_i(t)| \\
& + f_i(x_i(t)) - f_i(y(t))
+ f_i(y(t)) - f_i(x^*).
\end{align*}
Using $f_i(x_i(t))- f_i(y(t)) \ge -L |x_i(t)- y(t)|$ in the previous relation, we obtain 
\begin{align*}
g_i(t) (y(t)-x^*)  \ge - 2L |y(t)-x_i(t)| 
+ f_i(y(t)) - f_i(x^*).
\end{align*}
Using the previous relation and $|g_i(t)| \le L$ in \eqref{eq:lemtemp1}, we have  
\begin{align*}
& \left( y(t+1)-x^* \right)^2 \le \left( y(t)-x^* \right)^2 + L^2  \alpha(t)^2  \\
& - \frac{2 \alpha(t)}{n} \sum_{i=1}^n \left( f_i(y(t))-f_i(x^*)\right) +  \frac{4 L  \alpha(t)}{n} \sum_{i=1}^n |y(t)-x_i(t)|.
\end{align*}
Rearranging the terms of the previous relation yields that 
\begin{align*}
& \frac{2 \alpha(t)}{n} \sum_{i=1}^n \left( f_i(y(t))-f_i(x^*) \right)\\
&  \le -\left(y(t+1)-x^* \right)^2 + \left( y(t)-x^* \right)^2 + L^2 \alpha(t)^2   \\
& +  \frac{4 L  \alpha(t)}{n} \sum_{i=1}^n |y(t)-x_i(t)|.
\end{align*}
Taking summation of the previous relation from $t=0$ to $T$ and noting the telescopic cancellation of terms, we obtain  
\begin{align*}
&\frac{2}{n} \sum_{t=0}^T \alpha(t) \left( F(\mathbf{y}(t))- F(\mathbf{x}^*)\right)  \\
& =  \sum_{t=0}^T \frac{2 \alpha(t)}{n} \sum_{i=1}^n \left( f_i(y(t))-f_i(x^*) \right)  \le \left(y(0)-x^* \right)^2 \\
& + L^2 \sum_{t=0}^T \alpha(t)^2  +   \sum_{t=0}^T \frac{4 L  \alpha(t)}{n} \sum_{i=1}^n |y(t)-x_i(t)|.
\end{align*}
Since $\hat{y}(T)= \frac{1}{\sum_{t=0}^T \alpha(t)} \sum_{t=0}^T \alpha(t) y(t)$. Using convexity of the functions along with the previous relation, we obtain \eqref{eq:lem:optimality2}, 
which completes the proof. 
\end{proof}
Next, we will bound the difference between the objective function value at $\{\hat{\mathbf{x}}(t)\}_{t=0}^{\infty}$ and the optimal point.
\begin{theorem}\label{th:optimality}
The ergodic average of the sequence generated by algorithm \eqref{eq:estimateupdate}-\eqref{eq:weightupdate}, satisfies 
\begin{align} \label{eq:optmalitybound}
&  F(\hat{\mathbf{x}}(T))- F(\mathbf{x}^*)  \le \frac{n}{2 \sum_{t=0}^T \alpha(t)} \left( y(0)-x^* \right)^2 \nonumber \\
& + \frac{n L^2}{2 \sum_{t=0}^T \alpha(t)} \sum_{t=0}^T {\alpha(t)^2} \nonumber \\
& + \frac{2 L}{ \sum_{t=0}^T \alpha(t)} \sum_{t=0}^T  \alpha(t) \sum_{i=1}^n |y(t)-x_i(t)|  + L \frac{1}{\sum_{t=0}^T \alpha(t)} \nonumber \\
& \left( \sum_{t=0}^T \alpha(t) \left( C \lambda^{t} ||\mathbf{x}(0)||_1 + L  n \sum_{s=0}^{t-1} C \lambda^{t-s} \alpha(s) \right) \right) 
\end{align}
\end{theorem}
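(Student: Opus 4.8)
The plan is to combine the two decompositions already established. Observe that
\[
F(\hat{\mathbf{x}}(T)) - F(\mathbf{x}^*) = \bigl( F(\hat{\mathbf{x}}(T)) - F(\hat{\mathbf{y}}(T)) \bigr) + \bigl( F(\hat{\mathbf{y}}(T)) - F(\mathbf{x}^*) \bigr),
\]
so it suffices to upper bound each of the two bracketed quantities and add the resulting bounds. The first term is controlled by Lemma \ref{lem:optimality1}, which gives precisely the last double sum in \eqref{eq:optmalitybound}. The second term is controlled by Lemma \ref{lem:optimality2}, which gives the first three terms in \eqref{eq:optmalitybound}: the $\bigl(y(0)-x^*\bigr)^2$ term, the $\sum_{t=0}^T \alpha(t)^2$ term, and the $\sum_{t=0}^T \alpha(t) \sum_{i=1}^n |y(t)-x_i(t)|$ term. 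Summing the two inequalities term by term yields \eqref{eq:optmalitybound} directly.

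In more detail, I would first invoke Lemma \ref{lem:optimality2} to write $F(\hat{\mathbf{y}}(T)) - F(\mathbf{x}^*)$ as being at most the three-term expression on the right-hand side of \eqref{eq:lem:optimality2}. Next I would invoke Lemma \ref{lem:optimality1} to bound $F(\hat{\mathbf{x}}(T)) - F(\hat{\mathbf{y}}(T))$ by $L \frac{1}{\sum_{t=0}^T \alpha(t)}\bigl( \sum_{t=0}^T \alpha(t)( C \lambda^{t} \|\mathbf{x}(0)\|_1 + L n \sum_{s=0}^{t-1} C \lambda^{t-s} \alpha(s)) \bigr)$. Adding these two and using the telescoping identity above for the left-hand side gives exactly the claimed bound, so no further estimation is needed.

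Since both ingredient lemmas are already proved in the excerpt, there is essentially no obstacle: the argument is a one-line addition of two established inequalities. The only thing to be careful about is bookkeeping — making sure the indices of summation, the $\lambda$ exponents ($\lambda^{t-1-s}$ in the feasibility bound versus $\lambda^{t-s}$ as it appears here after absorbing a factor $\lambda^{-1}$ into the constant, consistent with how Lemma \ref{lem:optimality1} is stated), and the constant factors match between the two lemma statements and the target inequality. Provided those match up as written, the proof of Theorem \ref{th:optimality} is complete. If one then specializes to a step size such as $\alpha(t) = \Theta(1/\sqrt{t})$, each term on the right-hand side of \eqref{eq:optmalitybound} is $O(\log T / \sqrt{T})$, which recovers the claimed rate, though that computation is a routine corollary rather than part of this theorem's proof.
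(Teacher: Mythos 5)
Your proposal is correct and matches the paper's own proof, which simply combines Lemma \ref{lem:optimality1} and Lemma \ref{lem:optimality2} via the same telescoping decomposition $F(\hat{\mathbf{x}}(T))- F(\mathbf{x}^*) = \bigl(F(\hat{\mathbf{x}}(T))- F(\hat{\mathbf{y}}(T))\bigr) + \bigl(F(\hat{\mathbf{y}}(T))- F(\mathbf{x}^*)\bigr)$ and adds the two bounds. No further comment is needed.
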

\begin{proof}
Follows by combining Lemma \ref{lem:optimality1} and Lemma \ref{lem:optimality2}.
\end{proof}
\vspace{-.12 in}
\subsection{Convergence Rate: Choice of Step size}
In this section we characterize the convergence rate of the algorithm for $\alpha(t)=\frac{1}{\sqrt{t+1}}$, $t \ge 0$, as the step size sequence. We will show this choice of step size yields $O(\frac{\log T}{\sqrt{T}})$ convergence iteration complexity, which is the optimal convergence rate of any first-order optimization method over the class of convex functions as shown in \cite{nesterov2004introductory} (see Section 3 of \cite{nesterov2004introductory} for general lower complexity bounds). 
\begin{theorem}\label{th:ratestepsize}
Using the ergodic average of the sequence generated by the proposed algorithm with $\alpha(t)=\frac{1}{\sqrt{t+1}}$, both rate of convergence to the optimal solution and rate of convergence of estimates to a consensus point are $O(\frac{\log T}{ \sqrt{T}})$. In particular, with $\mathbf{x}(0)=0$, for all $i, j \in V$ we have  
\begin{align}\label{eq:feasibilityrate}
& |\hat{x}_i(T) - \hat{x}_j(T)|  \le  2 L n C \frac{4}{1-\lambda} \frac{\log T}{\sqrt{T}},
\end{align}
and 
\begin{align}\label{eq:optimalityrate}
&  F(\hat{\mathbf{x}}(T))- F(\mathbf{x}^*)  \le n \left( x^* \right)^2 \frac{1}{2\sqrt{T}}  + \frac{n L^2}{2} \frac{2\log T}{\sqrt{T}} \nonumber\\
& + 2  L^2 n C \frac{4}{1-\lambda} \frac{\log T}{\sqrt{T}} + n  L^2 C \frac{4}{1-\lambda} \frac{\log T}{\sqrt{T}}.
\end{align}
\end{theorem}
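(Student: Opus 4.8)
The plan is to instantiate the bounds of Theorem~\ref{th:feasibility} and Theorem~\ref{th:optimality} at the specific step size $\alpha(t)=\frac{1}{\sqrt{t+1}}$ and $\mathbf{x}(0)=0$, and then estimate each of the resulting sums. First I would collect the elementary facts I will need about this step size: $\sum_{t=0}^T \alpha(t) = \sum_{t=0}^T \frac{1}{\sqrt{t+1}} \ge \sqrt{T}$ (bounding the sum below by an integral), $\sum_{t=0}^T \alpha(t)^2 = \sum_{t=0}^T \frac{1}{t+1} \le 1+\log(T+1) \le 2\log T$ for $T$ large, and $\alpha(0)=1$ with $\alpha$ non-increasing and converging to zero (so Lemma~\ref{lem:feasibility} applies). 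With $\mathbf{x}(0)=0$ the term $C\lambda^{t}\|\mathbf{x}(0)\|_1$ vanishes entirely, which removes one family of terms from every bound.

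The key remaining estimate is the geometric-against-step-size double sum $\sum_{t=0}^T \alpha(t) \sum_{s=0}^{t-1} \lambda^{t-1-s}\alpha(s)$. Here I would swap the order of summation: $\sum_{s=0}^{T-1}\alpha(s)\sum_{t=s+1}^{T}\alpha(t)\lambda^{t-1-s} \le \sum_{s=0}^{T-1}\alpha(s)^2 \sum_{k\ge 0}\lambda^{k} = \frac{1}{1-\lambda}\sum_{s=0}^{T-1}\alpha(s)^2$, using $\alpha(t)\le\alpha(s)$ for $t>s$. This shows the double sum is at most $\frac{1}{1-\lambda}\cdot 2\log T$. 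Dividing by $\sum_{t=0}^T\alpha(t)\ge\sqrt T$ turns each such contribution into a term of order $\frac{\log T}{(1-\lambda)\sqrt T}$, and tracking the constants $L$, $n$, $C$, and the factor $2$ from Theorem~\ref{th:feasibility}/\ref{th:optimality} gives exactly the coefficient $2LnC\frac{4}{1-\lambda}$ in \eqref{eq:feasibilityrate} (the extra factor near $4$ absorbing the crude bound $\sum\alpha(s)^2\le 2\log T$). For \eqref{eq:optimalityrate} I would similarly read off: the term $\frac{n}{2\sum\alpha(t)}(y(0)-x^*)^2$ becomes $\le \frac{n(x^*)^2}{2\sqrt T}$ since $y(0)=\frac1n\sum x_i(0)=0$; the term $\frac{nL^2}{2\sum\alpha(t)}\sum\alpha(t)^2 \le \frac{nL^2}{2}\cdot\frac{2\log T}{\sqrt T}$; the consensus-penalty term $\frac{2L}{\sum\alpha(t)}\sum\alpha(t)\sum_i|y(t)-x_i(t)|$ is handled by plugging in the per-node bound of Lemma~\ref{lem:feasibility} (again with the $\mathbf{x}(0)=0$ simplification) and reusing the same double-sum estimate, yielding $2L^2nC\frac{4}{1-\lambda}\frac{\log T}{\sqrt T}$; and the last line of Theorem~\ref{th:optimality} contributes $nL^2C\frac{4}{1-\lambda}\frac{\log T}{\sqrt T}$. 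Summing these four pieces gives \eqref{eq:optimalityrate}, and \eqref{eq:feasibilityrate} follows by writing $|\hat x_i(T)-\hat x_j(T)|\le|\hat x_i(T)-\hat y(T)|+|\hat y(T)-\hat x_j(T)|$ and applying Theorem~\ref{th:feasibility}.

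The main obstacle is purely bookkeeping rather than conceptual: making the constants come out to the precise stated form requires being careful about whether one uses $\lambda^{t-1-s}$ versus $\lambda^{t-s}$ (the two theorems differ by one power, harmless since $\lambda<1$), about the off-by-one in $\sum_{s=0}^{t-1}$ versus $\sum_{s=0}^{t}$, and about bounding $\sum_{t=0}^T\frac{1}{t+1}$ by $2\log T$ (valid for $T\ge$ some small constant, which is the regime of interest for an asymptotic rate). I would state explicitly that the bounds hold for $T$ sufficiently large so that $1+\log(T+1)\le 2\log T$ and $\sum_{t=0}^T\frac{1}{\sqrt{t+1}}\ge\sqrt T$, which is all that an $O\!\left(\frac{\log T}{\sqrt T}\right)$ claim needs. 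No new inequality beyond convexity, the integral comparison, and the geometric series is required — the work is entirely in substituting the outputs of the already-proved Theorems~\ref{th:feasibility} and~\ref{th:optimality} and simplifying.
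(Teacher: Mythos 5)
Your proposal is correct and follows the paper's overall route---substituting $\alpha(t)=1/\sqrt{t+1}$ into Theorems \ref{th:feasibility} and \ref{th:optimality}, using $\sum_{t=0}^T\alpha(t)\ge \sqrt{T}$ and $\sum_{t=0}^T\alpha(t)^2\le 2\log T$, and obtaining \eqref{eq:feasibilityrate} via the triangle inequality through $\hat{y}(T)$---but you handle the one genuinely nontrivial estimate by a different argument. The paper bounds the double sum $\sum_{t=0}^T\sum_{s=0}^{t-1}\lambda^{t-s-1}\alpha(s)\alpha(t)$ by reindexing over $i=t-s$, comparing the inner sum $\sum_j \frac{1}{\sqrt{j}\sqrt{i+j}}$ with an integral (which is where the logarithm appears), and then summing the geometric series in $i$, arriving at $\frac{4}{1-\lambda}\log T$. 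You instead swap the order of summation and use monotonicity of the step size, $\alpha(t)\le\alpha(s)$ for $t>s$, to get $\frac{1}{1-\lambda}\sum_{s=0}^{T-1}\alpha(s)^2\le \frac{2\log T}{1-\lambda}$; this is shorter, avoids the integral computation entirely, and even improves the constant ($2$ versus $4$), so it implies the stated bounds a fortiori. One bookkeeping remark that applies equally to your sketch and to the paper's own proof: when the per-node bound of Lemma \ref{lem:feasibility} is summed over $i=1,\dots,n$ inside the consensus-penalty term $\frac{2L}{\sum_t\alpha(t)}\sum_t\alpha(t)\sum_{i=1}^n|y(t)-x_i(t)|$ of Theorem \ref{th:optimality}, a careful count yields an extra factor of $n$ (i.e.\ $n^2$ rather than $n$ multiplying $L^2 C\frac{4}{1-\lambda}\frac{\log T}{\sqrt{T}}$); this does not affect the claimed $O(\frac{\log T}{\sqrt{T}})$ rate, but if you want the constants airtight you should either carry the $n^2$ or indicate where that factor is absorbed.
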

\begin{proof}
With $\alpha(t)=\frac{1}{\sqrt{t+1}}$, we have  
\begin{align*}
\sum_{t=0}^{T} \alpha(t) \ge \int_1^{T+2} \frac{1}{\sqrt{x}} \mathrm{d}x = 2(\sqrt{T+2}-1) \ge \sqrt{T}.
\end{align*}
Since $\alpha(t) \le 1$, we have $\sum_{t=1}^{T} \alpha(t) \lambda^t \le \frac{1}{1-\lambda}$, and 
\begin{align*}
& \sum_{t=0}^T \sum_{s=0}^{t-1} \lambda^{t-s-1} \alpha(s) \alpha(t) = 1+ \sum_{i=1}^{T} \lambda^{i-1} \left( \sum_{j=1}^{T-i+1} \frac{1}{\sqrt{j} \sqrt{i+j}} \right) \\
& \le 1+ \sum_{i=1}^{T} \lambda^{i-1} \int_{x=0}^{T-i+1} \frac{1}{\sqrt{x}\sqrt{x+i}}\\
& = 1+ \sum_{i=1}^{T} \lambda^{i-1} 2 \left( \log(\sqrt{T-i+1}+ \sqrt{T-1}) \right) \\
& \le 1+  2 \frac{1}{1-\lambda} \left( \log 2 + \frac{1}{2}\log T \right) \le \frac{4}{1-\lambda} \log T,
\end{align*}
for $T \ge 2$. 
Using the previous three relations in Theorem \ref{th:feasibility}, for any $i$ and $T$ we obtain 
\begin{align*}
& |\hat{x}_i(T) - \hat{y}(T)| \le C \frac{1}{1-\lambda} ||\mathbf{x}(0)||_1 \frac{1}{\sqrt{T}} +  L n C \frac{4}{1-\lambda} \frac{\log T}{\sqrt{T}}.
\end{align*}
This shows that rate of convergence to a consensus point  is $O(\frac{\log T}{ \sqrt{T}})$. In particular, using this relation for $i$ and $j$ and setting $\mathbf{x}(0)=0$, we obtain \eqref{eq:feasibilityrate}. 
Using the three relations along with Theorem \ref{th:optimality} and $\sum_{t=0}^T \alpha(t)^2 \le 1+ \log (T+1) \le 2 \log T$ (for $T \ge 4$), yields 
\begin{align*}
&  F(\hat{\mathbf{x}}(T))- F(\mathbf{x}^*)   \nonumber \\
& \le n \left( y(0)-x^* \right)^2 \frac{1}{2\sqrt{T}} + \frac{n L^2}{2} \frac{2\log T}{\sqrt{T}} \nonumber \\
& + 2 L C \frac{1}{1-\lambda} ||\mathbf{x}(0)||_1 \frac{1}{\sqrt{T}} + 2  L^2 n C \frac{4}{1-\lambda} \frac{\log T}{\sqrt{T}} \nonumber \\
& +  L C \frac{1}{1-\lambda} ||\mathbf{x}(0)||_1 \frac{1}{\sqrt{T}} + n  L^2 C \frac{4}{1-\lambda} \frac{\log T}{\sqrt{T}},
\end{align*}
This shows that the rate of convergence to the optimal solution is $O(\frac{\log T}{\sqrt{T}})$. In particular,  setting $\mathbf{x}(0)=0$, we obtain \eqref{eq:optimalityrate}. 
\end{proof}
\vspace{- .03 in}
\section{Numerical Results}\label{sec:numerical}
In this section, we show numerical results based on the proposed distributed subgradient algorithm to demonstrate the performance of the algorithm. We consider minimizing the function $F(x)=\frac{1}{2} \sum_{i=1}^n (x-a_i)^2$ where $a_i$ is a scalar that is known only to node $i$. This problem appears in distributed estimation where the goal is to estimate the parameter $x^*$, using local measurements $a_i=x^*+N_i$ at each node $i=1, \dots, n$. Here $N_i$ represents measurements noise, which we assume to be jointly Gaussian with mean zero and variance one.  The maximum likelihood estimate is the minimizer $x^*$ of $F(x)$. We let $n=20$, $a_i=i$, and consider a directed graph shown in Figure \ref{fig:simul}. We plot the differences $\hat{x}_1(T)-x^*$ and $\hat{x}_n(T)-x^*$ as a function of $T$ in Figure \ref{fig:simulexample}. 
\vspace{-.03 in}
\begin{figure}[t]
\centering
  \includegraphics[width=1 \linewidth]{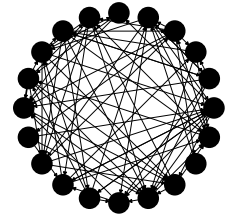}
  \caption{Sample Network with $n=20$ nodes. }
  \label{fig:simul}
\end{figure}
\vspace{- .05 in}
\begin{figure}[t]
\centering
  \includegraphics[width=1. \linewidth]{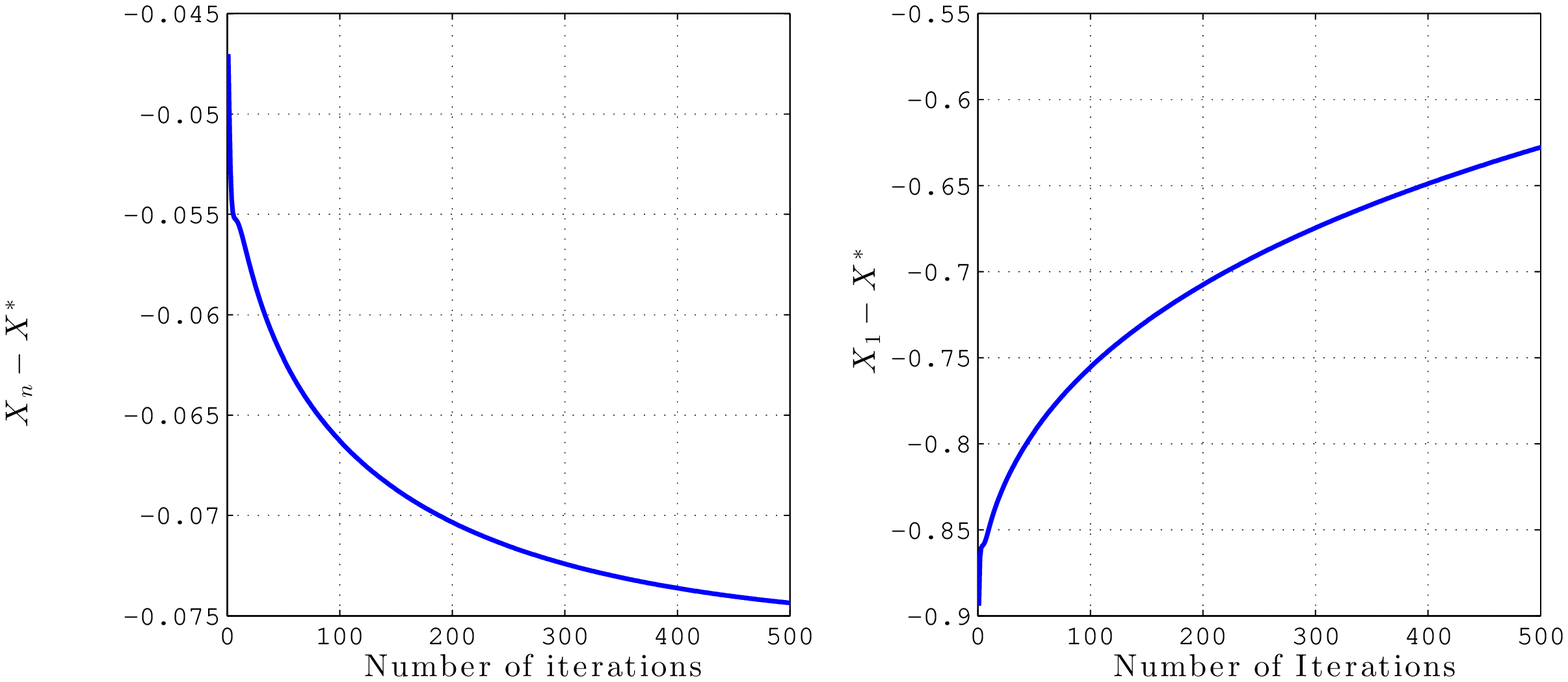}
  \caption{Performance of algorithm over sample network: $\hat{x}_i(T)-x^*$ for $i=1$ and $i=20$ as a function of $T$. }
  \label{fig:simulexample}
\end{figure}
\vspace{- .1 in}
\section{Conclusion}\label{sec:conclusion}
We considered a multi agent optimization problem where a directed network of agents collectively solves a global 
optimization problem with the objective function given by the sum of locally known convex 
functions. We propose a distributed subgradient algorithm in which each agent performs local processing based on his local information and information obtained from his incoming neighbors. In our algorithm, agents scale the  information they send to others with dynamically updated weights that converge to balancing weights of the graph and we show this technique overcomes the asymmetries caused by the directed communication network. We show that both the rate of convergence of the objective function to optimal value, and the rate of convergence of the estimates to a consensus point is $O(\frac{\log T}{\sqrt{T}})$, where $T$ is the number of iterations.

\bibliographystyle{abbrv}
\vspace{- .1 in}
\bibliography{references}
\end{document}